\theoremstyle{plain}
\newtheorem{theorem}{\bf Theorem}[section]
\newtheorem{lemma}[theorem]{\bf Lemma}
\newtheorem{proposition}[theorem]{\bf Proposition}
\newtheorem{conjecture}[theorem]{\bf Conjecture}
\theoremstyle{definition}
\newtheorem{definition}[theorem]{\bf Definition}
\newtheorem{example}[theorem]{\bf Example}
\newtheorem{remark}[theorem]{\bf Remark}
\newcommand{\R}{\mathbb{R}}
\newcommand{\N}{\mathbb{N}}
\newcommand{\disp}{\displaystyle}
\newcommand{\nai}[2]{\langle #1,#2\rangle}
\newcommand{\eqa}[1]{
\begin{align*}
#1
\end{align*}}
\newcommand{\IN}{\mathbb N}
\newcommand{\ve}{\varepsilon}
\newcommand{\ip}[1]{\mathopen{\langle}#1\mathclose{\rangle}}
\title{Proof of the Paszkiewicz conjecture about a product of positive contractions}
\date{\today}
\author[H. Ando]{Hiroshi Ando}
\address{Hiroshi~Ando, Department of Mathematics and Informatics, Chiba University, 1-33 Yayoi-cho, Inage, Chiba, 263- 8522,
Japan}
\email{hiroando@math.s.chiba-u.ac.jp}
\author[Y.~Miyamoto]{Yuki Miyamoto}
\address{Yuki~Miyamoto, Department of Mathematics and Informatics, Chiba University, 1-33 Yayoi-cho, Inage, Chiba, 263- 8522,
Japan}
\email{23wm0119@student.gs.chiba-u.jp}
\author[N.~Ozawa]{Narutaka Ozawa}
\address{Narutaka~Ozawa, Research Institute for Mathematical Sciences, Kyoto University Kyoto, 606-8502, Japan}
\email{narutaka@kurims.kyoto-u.ac.jp}
\let\origmaketitle\maketitle
\def\maketitle{
  \begingroup
  \def\uppercasenonmath##1{} 
  \let\MakeUppercase\relax 
  \origmaketitle
  \endgroup
}
\begin{document}
\maketitle
\begin{abstract}
The Paszkiewicz conjecture about a product of positive contractions asserts that given a decreasing sequence $T_1\ge T_2\ge \dots$ of positive contractions on a separable infinite-dimensional Hilbert space, the product $S_n=T_n\dots T_1$ converges strongly. Recently, the first named author verified the conjecture for certain classes of sequences. In this paper, we prove the Paszkiewicz conjecture in full generality. Moreover, we show that in some cases, a generalized version of the Paszkiewicz conjecture also holds. 
\end{abstract}

\noindent

\medskip

\noindent
{\bf Mathematics Subject Classification (2020) 47A63, 47A65}.\\
Keywords: positive contractions, operator products, Hilbert spaces, spectral theory
\medskip

\tableofcontents 
\section{Introduction}
Let $H$ be a separable infinite-dimensional Hilbert space, which we fix throughout the paper. The Paszkiewicz's conjecture about a product of positive contraction is the following.   
\begin{conjecture}[Adam Paszkiewicz, 2018]\label{conj Paszkiewicz} 
Let $T_1\ge T_2\ge \dots $ be a sequence of positive linear contractions on $H$. 
Then the sequence $S_n:=T_nT_{n-1}\cdots T_1$ converges strongly. 
\end{conjecture}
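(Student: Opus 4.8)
The plan is to reduce, using monotonicity, to the case where the strong limit $T=\inf_n T_n$ has no nonzero fixed vector (so that the product must converge to $0$), then to prove that $S_n\xi\to 0$ weakly, and finally to upgrade this to strong convergence by a quantitative spectral estimate driven by the recursion $S_{n+1}=T_{n+1}S_n$. I would begin with the facts that are available already because each $T_n$ is a positive contraction: the decreasing sequence $(T_n)$ converges strongly to a positive contraction $T=\inf_nT_n$; for a fixed $\xi$, since $0\le T_{n+1}\le I$ one has $\|S_{n+1}\xi\|\le\|S_n\xi\|$, so $\|S_n\xi\|$ decreases to some $c(\xi)\ge0$; and from the operator inequality $(I-T_{n+1})^2\le I-T_{n+1}^2$ together with telescoping, $\sum_n\|S_{n+1}\xi-S_n\xi\|^2=\sum_n\|(I-T_{n+1})S_n\xi\|^2\le\sum_n\bigl(\|S_n\xi\|^2-\|S_{n+1}\xi\|^2\bigr)\le\|S_1\xi\|^2<\infty$, so in particular $S_{n+1}\xi-S_n\xi\to0$. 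Now monotonicity enters: if $Tv=v$ then $v\le T_nv\le v$ forces $T_nv=v$ for all $n$, so $S_n$ acts as the identity on $\ker(I-T)$, which reduces every $T_n$. Compressing to the invariant complement, I may assume $\ker(I-T)=0$, and the goal becomes: show $S_n\to0$ strongly.

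Next I would show $S_n\xi\rightharpoonup0$ weakly for every $\xi$. If $\eta$ is a weak limit point of the bounded sequence $(S_n\xi)$, then along a subsequence, combining $(I-T_{n+1})S_n\xi\to0$ with the strong convergence $T_{n+1}\to T$ gives $(I-T)\eta=0$, hence $\eta=0$; since a bounded sequence with a unique weak limit point converges weakly, $S_n\xi\rightharpoonup0$. A sharper consequence of monotonicity, which I expect to need, is this: because $T_m\ge T_{n+1}$ whenever $m\le n+1$, we have $\langle T_mS_n\xi,S_n\xi\rangle\ge\langle T_{n+1}S_n\xi,S_n\xi\rangle$, and the right-hand side tends to $c(\xi)^2$ (since $\langle(I-T_{n+1})S_n\xi,S_n\xi\rangle\le\|S_n\xi\|^2-\|S_{n+1}\xi\|^2\to0$); comparing with the upper bound $\langle T_mS_n\xi,S_n\xi\rangle\le\|S_n\xi\|^2\to c(\xi)^2$, we conclude that for each fixed $m$, $\|(I-T_m)S_n\xi\|^2\le\langle(I-T_m)S_n\xi,S_n\xi\rangle\to0$ as $n\to\infty$; that is, the trajectory becomes asymptotically fixed by every individual $T_m$.

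It remains to prove $c(\xi)=0$, and this is the heart of the matter. Fix $\xi$, suppose $c:=c(\xi)>0$, and seek a contradiction. The reductions above supply a weakly null trajectory $\eta_n:=S_n\xi$ of asymptotic length $c$, with $\ell^2$-summable increments $\eta_{n+1}-\eta_n=-(I-T_{n+1})\eta_n$, asymptotically fixed by each $T_m$, obeying $\eta_{n+1}=T_{n+1}\eta_n$, and with $\ker(I-T)=0$ --- equivalently, the fixed-point subspaces $\ker(I-T_m)$ decrease to $0$. I would play the recursion against monotonicity by tracking a spectral defect of $\eta_n$, for instance its mass outside a near-$1$ spectral subspace $\mathbf{1}_{[1-\delta,1]}(T_m)H$, with the index $m$ and the scale $\delta\downarrow0$ coupled to a sparse subsequence $n_1<n_2<\cdots$: monotonicity makes these spectral subspaces essentially nested in the index, the increment bound limits how much the defect can grow between $n_j$ and $n_{j+1}$, and one hopes to extract a contraction-type recursion for the defect whose inhomogeneous term is summable (again by the $\ell^2$-increment bound), forcing the defect to $0$; combined with $\ker(I-T)=0$, which forces the near-$1$ spectral subspaces to genuinely contract, this would push $\|\eta_n\|$ below every positive level, contradicting $c>0$. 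The main obstacle is precisely this quantitative step: operator monotonicity of $(T_n)$ is the only structural hypothesis, and it behaves badly under squaring and under the non-commuting products $T_m\cdots T_{n+1}$, so one cannot freely pass to norm limits of the $T_n$ or to their spectral projections; arranging the spectral bookkeeping so that the soft information above condenses into a genuine geometric decay is, I expect, where the bulk of the work lies.
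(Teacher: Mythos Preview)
Your preliminary reductions are correct and overlap with the paper's setup: splitting off $\ker(I-T)$, the monotonicity of $\|S_n\xi\|$, the $\ell^2$-summability of the increments, and the fact that the trajectory becomes asymptotically fixed by each individual $T_m$ are all fine and useful observations.

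The gap is exactly where you locate it, and your proposed tool will not close it. The claim that ``monotonicity makes these spectral subspaces essentially nested in the index'' is unsupported: the map $A\mapsto \mathbf{1}_{[1-\delta,1]}(A)$ is \emph{not} operator monotone, so from $T_{m}\ge T_{m+1}$ you cannot compare $\mathbf{1}_{[1-\delta,1]}(T_m)$ and $\mathbf{1}_{[1-\delta,1]}(T_{m+1})$ in either direction. (Only the eigenprojections $P_m=\mathbf{1}_{\{1\}}(T_m)$ are nested, and those are too small to carry the mass you want to track.) Without that nesting, your hoped-for ``contraction-type recursion for the defect'' has no mechanism to propagate the monotonicity hypothesis through the product, which is precisely the difficulty you flag in your last sentence.

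The paper's proof supplies the missing device: rather than spectral projections, use the genuinely operator monotone functions $f_k(t)=1-(1-t)^{1/k}$ on $[0,1]$, which increase pointwise to $\mathbf{1}_{\{1\}}$ as $k\to\infty$. Since $f:=f_k$ is operator monotone and $t^2f(t)\le f(t)$, one gets the Lyapunov-type inequality
\[
\langle f(T_n)\xi_n,\xi_n\rangle \le \langle T_{n-1}f(T_{n-1})T_{n-1}\xi_{n-1},\xi_{n-1}\rangle \le \langle f(T_{n-1})\xi_{n-1},\xi_{n-1}\rangle,
\]
so this quantity is monotone along the trajectory. Starting from $\xi_m\in P_m^\perp(H)$ one can make $\langle f(T_m)\xi_m,\xi_m\rangle$ as small as desired by choosing $k$ large; this bound then persists for all $n\ge m$. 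A Markov-type argument against the spectral measure of $(T_n,\xi_n)$ converts this into a uniform lower bound on $\|\xi_n\|^2-\|\xi_{n+1}\|^2$ whenever $\|\xi_n\|^2$ exceeds a given $\varepsilon$, forcing $\|\xi_n\|^2\le\varepsilon$ after finitely many steps. The whole argument is under a page. The moral is that your instinct to track a ``near-$1$ spectral mass'' is right, but the correct proxy is an operator monotone approximant to $\mathbf{1}_{\{1\}}$, not a spectral projection.
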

Since $T_1\ge T_2\ge \dots$ is a decreasing sequence of positive contractions, the limit $T:=\lim_{n\to \infty}T_n$ (SOT) exists (SOT stands for the strong operator topology). 
We will use the notation that for a Borel subset $A$ of $\R$, $1_A(T)$ denotes the spectral projection of $T$ corresponding to $A$. Let $P:=1_{\{1\}}(T)$. 
In \cite{AMPaszkiewicz}, Conjecture \ref{conj Paszkiewicz} is shown to be equivalent to Conjecture \ref{conj strong Paszkiewicz} below. 
\begin{conjecture}\label{conj strong Paszkiewicz} Let $T_1\ge T_2\ge \dots $ be as in Conjecture \ref{conj Paszkiewicz}. Then
$\disp \lim_{n\to \infty}S_n=P$ ($*$-strongly). 
\end{conjecture}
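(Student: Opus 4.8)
The plan is to prove Conjecture~\ref{conj strong Paszkiewicz} in three steps: a reduction to the case $P=0$, a telescoping ``energy'' estimate, and — the real content — a quantitative argument showing that when $1_{\{1\}}(T)=0$ the norms $\|S_nv\|$ must go to $0$.

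\textbf{Reduction.} If $v\in PH$ then $\langle Tv,v\rangle=\|v\|^2$, and since $T\le T_n\le 1$ this forces $\langle T_nv,v\rangle=\|v\|^2$, hence $T_nv=v$ and $S_nv=v$ for all $n$. As each $T_n$ is self-adjoint it also preserves $(1-P)H$; thus $PH$ and $(1-P)H$ reduce every $T_n$, hence every $S_n$ and $S_n^\ast$. Restricting to $(1-P)H$, the limit $T'=T|_{(1-P)H}$ has $1_{\{1\}}(T')=0$, and the assertion on $H$ follows from the assertion on $(1-P)H$ together with $S_n|_{PH}=\id$. So it suffices to show: \emph{if $1_{\{1\}}(T)=0$, then $S_n\to 0$ $\ast$-strongly.} Since $S_n^\ast S_n=S_{n-1}^\ast T_n^2S_{n-1}\le S_{n-1}^\ast S_{n-1}$, the sequence $S_n^\ast S_n$ decreases to some $A\ge 0$ in SOT, and the target is $A=0$ (which gives $S_n\to 0$ SOT); the adjoint statement will follow by the same method applied to $S_n^\ast v=T_1\cdots T_{n-1}(T_nv)$, where $T_nv\to Tv$.

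\textbf{Energy estimate.} Since $1-T_n^2=(1-T_n)(1+T_n)\ge 1-T_n\ge 0$,
\[
\|S_{n-1}v\|^2-\|S_nv\|^2=\langle(1-T_n^2)S_{n-1}v,S_{n-1}v\rangle\ \ge\ \|(1-T_n)^{1/2}S_{n-1}v\|^2 ,
\]
and summing gives $\sum_{n\ge 1}\|(1-T_n)^{1/2}S_{n-1}v\|^2\le\|v\|^2<\infty$. Using $T_k\ge T_n$ (so $1-T_k\le 1-T_n$) for $k\le n$, this yields, for each fixed $k$, $\|(1-T_k)^{1/2}S_nv\|\to 0$ as $n\to\infty$; since $1-T_k\ge\delta\,1_{[0,1-\delta]}(T_k)$, it says that if $c(v):=\lim_n\|S_nv\|>0$ then the unit vectors $w_n:=S_nv/\|S_nv\|$ concentrate, asymptotically in $n$, inside $1_{(1-\delta,1]}(T_k)H$ for every $\delta>0$ and every $k$. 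Note also $\|S_nv-S_{n-1}v\|\le\|(1-T_n)^{1/2}S_{n-1}v\|$: the one-step displacements are square-summable but not (a priori) summable.

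\textbf{The contradiction.} Suppose $c:=c(v)>0$ for some $v$. Choose $\delta>0$ with $1_{(1-\delta,1]}(T)$ small in SOT, possible since $1_{(1-\delta,1]}(T)\downarrow 1_{\{1\}}(T)=0$ as $\delta\downarrow 0$. The intended mechanism: $S_nv$ asymptotically lives where every $T_k$ is $\ge 1-\delta$, while the $T_k$ themselves decrease to an operator with no mass near $1$; hence, for $n$ far beyond a fixed large $m$, the product $T_n\cdots T_{m+1}$ must dissipate a definite fraction of $\|S_mv\|^2$, and iterating forces $\|S_nv\|\to 0$. Making this rigorous is the heart of the matter: one must quantify, using only $T_1\ge T_2\ge\cdots$ and $1_{\{1\}}(T)=0$, how the tail $(T_k)_{k>m}$ is compelled to push $S_mv$ out of the region where each individual $T_k$ is near $1$, and then convert this, via the telescoping identity, into a uniform decrement $\|S_nv\|^2\le\|S_mv\|^2-\rho(\delta)$ over suitable index blocks. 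This gives $A=0$, i.e.\ $S_n\to 0$ SOT; the same scheme for $S_n^\ast$ then gives $S_n^\ast\to 0$ SOT, so $S_n\to 0$ $\ast$-strongly, and undoing the reduction, $S_n\to P$ $\ast$-strongly in general.

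\textbf{Main obstacle.} The whole difficulty is the last step — upgrading ``$S_nv$ asymptotically concentrates, for each fixed $k$, where $T_k\approx 1$'' to genuine decay of $\|S_nv\|$. Two features block the easy routes: the concentration is only asymptotic and only for each $k$ separately, so a diagonal/weak-limit argument collapses ($w_n$ may tend weakly to $0$), and the one-step displacements, being only square-summable, cannot be summed to a Cauchy sequence. Equivalently, one must show the off-diagonal correlations $\langle S_m^\ast S_nv,v\rangle$ ($m\le n$, both large) converge to $\langle Av,v\rangle$, whereas the telescoping estimate only controls the case $n-m$ bounded. Since Paszkiewicz's non-monotone counterexamples show the statement fails without $T_1\ge T_2\ge\cdots$, monotonicity must be used essentially and quantitatively here — presumably through a Lyapunov-type functional refining $\|S_nv\|^2$, or a combinatorial block decomposition of the indices exploiting the relations $T_m\ge T_k$ among the spectral data of the $T_n$. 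I expect this to be where the real work lies.
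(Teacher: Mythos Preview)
Your reduction and telescoping energy estimate are correct and close to the paper's setup, but the proposal is not a proof: you explicitly flag the ``contradiction'' step as unfinished and call it the main obstacle. That is indeed where the entire content lies, and the paper supplies exactly the missing device you speculate about (``a Lyapunov-type functional refining $\|S_nv\|^2$'').

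The idea you are missing is the use of the \emph{operator monotone} functions $f_k(t)=1-(1-t)^{1/k}$ on $[0,1]$. Write $\xi_n:=S_{n-1}v$. Because $f_k$ is operator monotone and $T_n\le T_{n-1}$, one has
\[
\ip{f_k(T_n)\xi_n,\xi_n}=\ip{T_{n-1}f_k(T_n)T_{n-1}\xi_{n-1},\xi_{n-1}}
\le \ip{T_{n-1}^2 f_k(T_{n-1})\xi_{n-1},\xi_{n-1}}
\le \ip{f_k(T_{n-1})\xi_{n-1},\xi_{n-1}},
\]
so $n\mapsto \ip{f_k(T_n)\xi_n,\xi_n}$ is nonincreasing for $n\ge m$. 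Since $f_k\nearrow 1_{\{1\}}$ pointwise, one can (after first passing to $\xi_1\in P_m^\perp H$, which the paper does in place of your global $P=0$ reduction) choose $k$ so that the initial value $\ip{f_k(T_m)\xi_m,\xi_m}$ is at most $\ve/4$; then $\ip{f_k(T_n)\xi_n,\xi_n}\le\ve/4$ for \emph{all} $n\ge m$ simultaneously. Setting $\gamma=f_k^{-1}(1/2)$, this uniformly bounds the spectral mass of $(T_n,\xi_n)$ on $[\gamma,1]$ by $\ve/2$, so whenever $\|\xi_n\|^2\ge\ve$ one gets the definite decrement
\[
\|\xi_n\|^2-\|\xi_{n+1}\|^2=\int(1-t^2)\,d\mu_n(t)\ge (1-\gamma^2)\,\mu_n([0,\gamma))\ge (1-\gamma^2)\ve/2,
\]
forcing $\|\xi_n\|^2\le\ve$ after boundedly many steps. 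This is precisely the quantitative ``push out of the region where each $T_k$ is near $1$'' that you could not produce; the point is that operator monotonicity of $f_k$ converts the order $T_1\ge T_2\ge\cdots$ into a single monotone scalar quantity controlling, uniformly in $n$, how much of $\xi_n$ sits near the top of the spectrum of $T_n$. Your energy estimate only yields such control for each \emph{fixed} $T_k$, which, as you correctly diagnosed, is not enough.

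A minor side remark: your plan to obtain $S_n^\ast\to P$ ``by the same method'' is neither needed nor obviously correct --- $S_n^\ast=T_1\cdots T_n$ is the product in increasing order, so the monotonicity runs the wrong way. In the paper this direction is already known from \cite{AMPaszkiewicz}, so only $S_n\to P$ (SOT) has to be proved.
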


The Conjecture \ref{conj strong Paszkiewicz} is easily seen to be true in the following cases: 
\begin{example}\label{ex easy case}
\begin{itemize}
\item[(1)] The constant sequence $T_n\equiv T\,(\forall n)$. Then $S_n=T^n\to P$ (SOT). 
\item[(2)] Each $T_n$ is a projection $P_n$. Then $T=\lim_nT_n$ is also a projection, say $P$, and $S_n=P_n\to P$ (SOT). 
\item[(3)] $\|T_{n_0}\|<1$ for some $n_0$. Then because $\|T_{n+1}\|\le \|T_n\|\le \cdots$, $\|S_{n_0+k-1}\|\le \|T_{n_0}\|^k\to 0\,(k\to \infty)$. Thus, $S_n\to P=0$ in norm.  
\item[(4)] $T_nT_m=T_mT_n$ for all $n,m\in \N$. In this case, if $X$ is the Gelfand spectrum of the unital abelian C$^*$-algebra generated by  $\{T_1,T_2,\dots\}$, then we may view $T_n=f_n$ for some $f_n\in C(X)$ and $f_1(x)\ge f_2(x)\ge \dots \ge f(x)=\lim_nf_n(x)$ for $x\in X$. Thus 
$S_n(x)=f_n(x)\cdots f_1(x)$. We may identify $H=L^2(X,\mu)$ for some Borel probability measure $\mu$ on $X$ with full support. Let $x\in X$. If $f(x)=1$, then $f_k(x)=1$ for every $k\in \N$ and thus $S_n(x)=1$ for every $n\in \N$. 
If $f(x)<1$, then there exists $k_0\in \N$ such that $f_k(x)<\frac{f(x)+1}{2}<1$ for every $k\ge k_0$, and thus $S_{n+k_0}(x)\le \left (\frac{f(x)+1}{2}\right )^n\to 0\,(n\to \infty)$. Thus, $S_n(x)\to 1_{f^{-1}(\{1\})}$ pointwise, whence $S_n\to P$ (SOT). 
\end{itemize}
\end{example}
Moreover, it is proved in \cite{AMPaszkiewicz} that $\disp \lim_{n\to \infty}S_n^*=P$ (SOT), and the Paszkiewicz conjecture is true if either (i) the von Neumann algebra $\mathscr{M}=W^*(T_1,T_2,\dots)$ generated by $T_1,T_2,\dots$ is finite, i.e., it admits a faithful normal tracial state, or (ii) $T_1,T_2,\dots$ has uniform spectral gap at 1, i.e., there exists $\delta\in (0,1)$ and $N\in \N$ such that $\sigma(T_n)\cap (1-\delta,1)=\emptyset$ holds for all $n\ge N$. However, these are restrictive classes of sequences. Indeed, any von Neumann algebra $\mathscr{M}$ on $H$ is of the form $\mathscr{M}=W^*(T_1,T_2,\dots)$ for some decreasing sequence of positive contractions $T_1\ge T_2\ge \dots$ (Proposition \ref{prop any M can be realized}). 
In this paper, we show that the Paszkiewicz conjecture is true in full generality. 

In $\S$\ref{sec proof of Paszkiewicz conjecture}, we give a proof of the Paszkiewicz conjecture. We add an example to show that nevertheless, a product of decreasing sequence of positive contractions can have similar behaviour to orthogonal transformations (Proposition \ref{prop 2-dim}). 

In $\S$\ref{sec generalized Paszkiewicz conjecture}, we consider a generalization of the Paszkiewicz conjecture. The motivation behind such a generalization came from our experience that many of our earlier arguments we have discovered so far to prove the original Paszkiewicz conjecture for some classes of sequences rely not too much on the fact that in the definition of the product $S_n=T_n\dots T_1$, the operators appear in the monotone decreasing order, though it is crucial that the $T_n$ converges to $T$. This leads us to consider the following generalization of the Paszkiewicz conjecture. 
We denote by $\mathscr{S}$ the set of all self-maps $\sigma\colon \N\to \N$ which are proper. 
For $n\in \N$ and $\sigma\in \mathscr{S}$, define 
\[S_n^{\sigma}=T_{\sigma(n)}\dots T_{\sigma(1)}.\]
Then one can show that the $\sigma$-Paszkiewicz subspace $\disp H_{\sigma}=\left \{\xi\in H\,\middle|\, \lim_{n\to \infty}S_n^{\sigma}\xi=P\xi\right \}$
is a closed subspace of $H$, and we set 
\[H_{\mathscr{S}}=\bigcap_{\sigma\in \mathscr{S}}H_{\sigma}.\]
We say that the generalized Paszkiewicz conjecture holds for $T_1\ge T_2\ge \dots$, if $H_{\mathscr{S}}=H$ holds. 
We show in Theorem \ref{thm HS0 central projection} that if $\mathscr{M}$ is a factor, then the generalized Paszkiewicz conjecture holds if and only if $H_{\mathscr{S}}\neq \{0\}$. 
We then remark that some classes of sequences do satisfy the generalized Paszkiewicz conjecture.  
In particular, we show that this is the case if either (1) $T_1$, $T_2,\dots$ has uniform spectral gap at 1 (Theorem \ref{prop unif gap strong P}), extending the work \cite{AMPaszkiewicz}, or (2) $\disp \lim_{n\to \infty}\|T_n-T\|=0$ and $1$ is isolated in the spectrum $\sigma(T)$ of $T$ (the latter is a special case of the former, but it leads to the norm convergence of $S_n$. See Remark \ref{rem norm convergence and  1 isolated in spec(T)}).     

\section{Proof of the Paszkiewicz conjecture}\label{sec proof of Paszkiewicz conjecture}
Here, we prove the Paszkiewicz conjecture (note that the seaparability of $H$ is not required). 

\begin{proof}[Proof of the Paszkiewicz conjecture]
For each $m\in \N$, let $P_m=1_{\{1\}}(T_m)$ and $P=1_{\{1\}}(T)$, with $\disp T=\lim_{n\to \infty}T_n$ (SOT). Then $P_1\ge P_2\ge \dots \ge P$ and $\disp \lim_{n\to \infty}P_n=P$ (SOT) by \cite[Lemma 2.2]{AMPaszkiewicz}. Since $T_nP=PT_n=P$, we have $S_nP=P$ for each $n\in \N$. Thus, it suffices to show that $\disp \lim_{n\to \infty}S_nP^{\perp}=0$ (SOT). By $P_n^{\perp}\nearrow P^{\perp}$ (SOT), it suffices to show that $S_nP_m^{\perp}\xrightarrow{n\to \infty}0$ (SOT) for every $m\in \N$.   

Let $m\in\IN$, $\xi_1\in P_m^{\perp}(H)$, and $\ve>0$ be given 
and set $\xi_n:=T_{n-1}\xi_{n-1}=T_{n-1}\cdots T_1\xi_1$ for $n\geq2$.
It suffices to show  
$\lim_n \| \xi_n\|^2 \le \ve$. 
Note that $\xi_m\in P_m^{\perp}(H)$ by $T_jP_m=P_mT_j=P_m\,(j\le m)$, and that the sequence 
$\|\xi_n\|$ is decreasing. 
Consider the positive increasing functions 
$f_k(t) := 1-(1-t)^{1/k}$ on $[0,1]$. 
Since $f_k$ converges to the characteristic function for $\{1\}$, 
one has 
\[
\lim_{k\to \infty} \ip{f_k(T_m)\xi_m,\xi_m} = \| 1_{\{1\}}(T_m)\xi_m\|^2=0.
\] 
Take $k$ such that $\ip{f_k(T_m)\xi_m,\xi_m} \le \ve/4$ and write $f:=f_k$. 
Since $f$ is operator monotone, one has 
\[
\ip{f(T_{n})\xi_{n},\xi_{n}}
 \le \ip{ T_{n-1} f(T_{n-1}) T_{n-1}\xi_{n-1},\xi_{n-1}}
 \le \ip{f(T_{n-1})\xi_{n-1},\xi_{n-1}} \le \cdots \le \ve/4
\]
for all $n\geq m$. 
Let $e_{T_n}$ be the spectral measure associated with $T_n$ and $\mu_n=\nai{e_{T_n}(\cdot)\xi_n}{\xi_n}$ the corresponding scalar-valued spectral measure 
for $(T_{n},\xi_{n})$. 
Fix $n\geq m$ for a moment and suppose that 
$\mu_n([0,1])=\|\xi_{n}\|^2\geq \ve$. 
Put $\gamma:=f^{-1}(1/2)\in(0,1)$.
Since $\int f\,d\mu_n\le \ve/4$, one has $\mu_n([\gamma,1])\le \ve/2$ 
and so $\mu_n([0,\gamma))\geq \ve/2$. 
It follows that 
\[
(1-\gamma^2)\ve/2 \le (1-\gamma^2)\mu_n([0,\gamma)) 
\le \int 1-t^2 \,d\mu_n(t) = \|\xi_n\|^2-\|\xi_{n+1}\|^2 
\]
as long as $\|\xi_n\|^2\geq\ve$. 
This implies $\|\xi_n\|^2 \le \ve$ 
for any $n\geq m+2(1-\gamma^2)^{-1}\ve^{-1}\|\xi_m\|^2$. 
\end{proof}

We complement the proof with the following example.

\begin{proposition}\label{prop 2-dim}
For every $0<\delta<1$, there are $n\in\IN$ and a decreasing sequence 
$I\geq T_1\geq T_2 \geq \cdots \geq T_n$
of positive contractions on the $2$-dimensional Hilbert space 
$\ell_2^2$ that satisfy $T_n\geq (1-\delta)I$ and 
\[
T_n\cdots T_1\left(\begin{smallmatrix} 1 \\ 0\end{smallmatrix}\right)
=\left(\begin{smallmatrix} 0 \\ 1-\delta\end{smallmatrix}\right)
\]
\end{proposition}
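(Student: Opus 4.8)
\emph{Plan.} The idea is to make the product a telescoping composition that pushes $e_1$ along the quarter‑circle $\{(\cos\phi,\sin\phi):0\le\phi\le\pi/2\}$, multiplying the radius by a fixed factor at each of $N$ equal angular steps, so that after $N$ steps the radius has dropped to exactly $1-\delta$. Write $R_\phi$ for rotation by $\phi$ and $e(\phi)=\begin{pmatrix}\cos\phi\\ \sin\phi\end{pmatrix}$. Fix a large integer $N$ (to be constrained in terms of $\delta$), put $\Delta=\pi/(2N)$, $\theta_j=j\Delta$, and $\rho=(1-\delta)^{1/N}\in(1-\delta,1)$. For $j=1,\dots,N$ set $T_j=R_{\theta_{j-1}}\,M_j\,R_{-\theta_{j-1}}$ with
\[
M_j=\begin{pmatrix}\rho\cos\Delta & \rho\sin\Delta\\[1mm] \rho\sin\Delta & \tau_j\end{pmatrix},
\]
where $\tau_1>\tau_2>\dots>\tau_N$ is a decreasing sequence still to be chosen. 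Since $M_j\,e(0)=\rho\,e(\Delta)$, one gets $T_j\bigl(\rho^{j-1}e(\theta_{j-1})\bigr)=R_{\theta_{j-1}}M_j\bigl(\rho^{j-1}e(0)\bigr)=\rho^{j}R_{\theta_{j-1}}e(\Delta)=\rho^{j}e(\theta_j)$, hence $T_N\cdots T_1e_1=\rho^{N}e(\theta_N)=(1-\delta)e_2$ \emph{exactly}, for every admissible choice of the $\tau_j$. So the whole task reduces to choosing the $\tau_j$ so that each $T_j$ is a positive contraction with $T_j\ge(1-\delta)I$ and $T_1\ge T_2\ge\cdots\ge T_N$.

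\emph{Eigenvalue conditions.} As $T_j$ is a rotation‑conjugate of $M_j$, the properties "$M_j\ge 0$", "$M_j\le I$", "$M_j\ge(1-\delta)I$" pass to $T_j$. Applying Sylvester's criterion to $M_j$, $I-M_j$ and $M_j-(1-\delta)I$ shows these hold precisely when $\tau_j$ lies in the interval $[L_N,U_N]$ with
\[
L_N=(1-\delta)+\frac{\rho^{2}\sin^{2}\Delta}{\rho\cos\Delta-(1-\delta)},\qquad U_N=1-\frac{\rho^{2}\sin^{2}\Delta}{1-\rho\cos\Delta}.
\]
For $N$ large one has $\rho\cos\Delta>1-\delta$; the first correction term is $O(1/N^{2})$ and the second is $O(1/N)$, so $L_N<U_N$, $L_N\to1-\delta$ and $U_N\to1$.

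\emph{Monotonicity.} Conjugating by $R_{-\theta_{j-1}}$, the inequality $T_j\ge T_{j+1}$ becomes $M_j\ge R_\Delta M_{j+1}R_{-\Delta}$. Expanding $M_j-R_\Delta M_{j+1}R_{-\Delta}$ entrywise (using that $M_j$ and $M_{j+1}$ have the same first column) gives a symmetric matrix with $(1,1)$‑entry $\sin^{2}\!\Delta\,(3\rho\cos\Delta-\tau_{j+1})$, $(2,2)$‑entry $(\tau_j-\tau_{j+1})-\sin^{2}\!\Delta\,(3\rho\cos\Delta-\tau_{j+1})$, and off‑diagonal entry $\sin\Delta\,(\tau_{j+1}\cos\Delta-\rho+3\rho\sin^{2}\!\Delta)$; positivity is therefore equivalent to $\tau_{j+1}\le 3\rho\cos\Delta$ (trivially true) together with $\tau_j-\tau_{j+1}\ge\Phi(\tau_{j+1})$, where
\[
\Phi(t)=\sin^{2}\!\Delta\,(3\rho\cos\Delta-t)+\frac{(t\cos\Delta-\rho+3\rho\sin^{2}\!\Delta)^{2}}{3\rho\cos\Delta-t}\ \ge 0 .
\]
The key observation is that if $t$ lies within $O(1/N)$ of $U_N$, then $t-\rho=O(1/N)$ and hence $\Phi(t)=O(1/N^{2})$. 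Thus I would take $\tau_j=U_N-(j-1)\eta$ with $\eta$ a fixed constant multiple of $1/N^{2}$, chosen large enough that $\eta\ge\Phi(\tau_{j+1})$ for all $j$ (a short self‑consistency check: the $\tau_j$ then all stay in a window of width $O(1/N)$ just below $U_N$, on which $\Phi=O(1/N^{2})$). The total drop $(N-1)\eta=O(1/N)$ keeps every $\tau_j$ inside $[L_N,U_N]$ once $N$ is large in terms of $\delta$, so all of the required conditions hold and $n=N$ works.

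\emph{Where the work lies.} Everything conceptual is immediate — the telescoping identity $T_j(\rho^{j-1}e(\theta_{j-1}))=\rho^{j}e(\theta_j)$ forces $T_N\cdots T_1e_1=(1-\delta)e_2$ no matter what. The one genuine obstacle is bookkeeping: carrying out the $2\times2$ expansions \emph{exactly} (not merely to leading order in $\Delta$) to pin down $L_N,U_N,\Phi$, and then confirming that the "window near $U_N$'' for the $\tau_j$ really exists, i.e. that a step size $\eta=O(1/N^{2})$ can be chosen self‑consistently with $L_N<\tau_N<\tau_1<U_N$; this reduces to the elementary asymptotics $L_N\to1-\delta$, $U_N\to1$, and $\Phi=O(1/N^{2})$ on the relevant range.
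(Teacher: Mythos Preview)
Your telescoping identity and your formula for $\Phi$ are correct (and $\Phi$ simplifies to $\Phi(t)=\dfrac{(t-\rho c)^{2}+4\rho^{2}s^{2}}{3\rho c-t}$ with $c=\cos\Delta$, $s=\sin\Delta$). The genuine gap is the ``short self-consistency check'' you defer: it is not bookkeeping, and for small $\delta$ it \emph{fails for every choice} of $(\tau_j)$, not only the constant-step one. Put $v_j:=(\tau_j-\rho c)/(2s)$; the monotonicity condition becomes, to leading order, $v_j-v_{j+1}\ge\Delta(1+v_{j+1}^{2})$. The constraint $M_1\le I$ forces $v_1\le(U_N-\rho c)/(2s)$, and a direct computation gives $(U_N-\rho c)/(2s)\to c_*/\pi-\pi/(4c_*)$ as $N\to\infty$, where $c_*:=-\log(1-\delta)$; this limit is \emph{negative} whenever $\delta<1-e^{-\pi/2}\approx 0.79$. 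Then every $v_j<0$; setting $w_j:=-v_j>0$, one has $w_{j+1}-w_j\ge\Delta(1+w_{j+1}^{2})$, hence $w_{j+1}>w_j$ and $\tfrac{1}{w_j}-\tfrac{1}{w_{j+1}}=\tfrac{w_{j+1}-w_j}{w_jw_{j+1}}\ge\tfrac{\Delta w_{j+1}}{w_j}>\Delta$. Summing over $j=1,\dots,N-1$ yields $w_1<1/((N-1)\Delta)\to 2/\pi$. But $w_1\ge -v_{\max}\approx\pi/(4c_*)-c_*/\pi$, which exceeds $2/\pi$ as soon as $c_*<0.86$ (i.e.\ $\delta\lesssim 0.58$): contradiction. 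So with your ansatz for $M_j$ there is \emph{no} admissible sequence $(\tau_j)$ at all when $\delta$ is small. (The constant-step choice fails even for large $\delta$: optimising over $\tau_1$ the requirement becomes $K\ge 1+\pi^{2}K^{2}/16$, whose discriminant is $1-\pi^{2}/4<0$.) The underlying obstruction is that with a fixed shrink factor $\rho=(1-\delta)^{1/N}$, the angular step and the eigenvalue gap are both rigidly $\asymp 1/N$, and the monotonicity cost then accumulates to order~$1$ over $N$ steps.

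The paper's construction is essentially different. Its building blocks are $P_{\eta_k}+(1-\ve_k)P_{\eta_k}^{\perp}$ (always with top eigenvalue~$1$) but with a \emph{varying} gap $\ve_k=(D+n-k)^{-1}$ and a varying angular increment $\theta_k=\theta/\log(D+n-k)$. These are tuned so that $\sum_k\ve_k\theta_k\sim\theta\log\log n$ \emph{diverges} (so the accumulated rotation reaches $\pi/2$), while both $\sum_k\ve_k\theta_k^{2}$ (which bounds the norm loss $\prod_k\alpha_k$) and $\sum_k(n-k)\ve_k^{2}\theta_k^{2}$ (which bounds the monotonicity correction $\prod_k\beta_k^{\,n-k}$, via a separate $2\times2$ lemma) remain $O(\theta^{2})$ and hence can be made arbitrarily small. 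This ``one divergent series, two convergent series'' balance is the mechanism that makes the construction work, and it has no counterpart in a uniform-step scheme.
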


We need a preparatory construction. 
For a vector $\xi$, we denote by $P_\xi$ 
the rank-one orthogonal projection associated with $\xi$ 
and set $P_\xi^\perp:=I-P_\xi$. 
For a vector $\xi$, we denote by $P_\xi$ 
the rank-one orthogonal projection associated with $\xi$ 
and set $P_\xi^\perp:=I-P_\xi$. 

\begin{lemma}\label{alpha}
Let $\ve\in[0,1/2]$, $\theta\in[0,\pi/3]$, 
$\xi:=\left(\begin{smallmatrix} 1 \\ 0 \end{smallmatrix}\right)$, and 
$\eta:=\left(\begin{smallmatrix} \cos\theta \\ \sin\theta\end{smallmatrix}\right)$.
Let 
$\phi\in[0,\pi/3]$, 
$\zeta=\left(\begin{smallmatrix} \cos\phi \\ \sin\phi \end{smallmatrix}\right)$, 
and $\alpha\in[0,1]$ be given by the equation   
\[
(P_\eta + (1-\ve) P_\eta^\perp)\xi = \alpha \zeta.
\]
Then, one has $\alpha\geq1-\ve\theta^2$ 
and $|\phi-\ve\theta|\le C\ve\theta^3$, where $C$ is an absolute constant.
\end{lemma}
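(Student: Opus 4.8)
The plan is to compute everything explicitly in coordinates and then estimate, since the setup is genuinely two-dimensional. First I would write out the action of the operator $A_\ve := P_\eta + (1-\ve)P_\eta^\perp$ on $\xi$. Since $P_\eta\xi = (\cos\theta)\eta$ and $P_\eta^\perp\xi = \xi - (\cos\theta)\eta$, we get $A_\ve\xi = \xi - \ve(\cos\theta)\eta = \bigl(\begin{smallmatrix}1-\ve\cos^2\theta \\ -\ve\cos\theta\sin\theta\end{smallmatrix}\bigr)$. Thus $\alpha = \|A_\ve\xi\| = \bigl((1-\ve\cos^2\theta)^2 + \ve^2\cos^2\theta\sin^2\theta\bigr)^{1/2}$, which simplifies to $\alpha = \bigl(1 - \ve(2-\ve)\cos^2\theta\bigr)^{1/2} = \bigl(1-2\ve\cos^2\theta+\ve^2\cos^2\theta\bigr)^{1/2}$. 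For the lower bound $\alpha \ge 1-\ve\theta^2$: squaring the desired inequality reduces it (using $\cos^2\theta = 1-\sin^2\theta$ and $\sin^2\theta \le \theta^2$, and $0\le\ve\le 1/2$, $\theta\le\pi/3$) to an elementary polynomial inequality in $\ve$ and $\theta$ that I would verify by a short estimate: $1-2\ve\cos^2\theta+\ve^2\cos^2\theta \ge 1 - 2\ve\theta^2 + \ve^2\theta^4$ follows from $\ve^2\cos^2\theta \ge 0$ and $2\ve\cos^2\theta \le 2\ve - 2\ve(\theta^2 - \theta^4/\text{(something)})$... more carefully, $-2\ve\cos^2\theta = -2\ve + 2\ve\sin^2\theta$ and one wants $-2\ve + 2\ve\sin^2\theta + \ve^2\cos^2\theta \ge -2\ve\theta^2 + \ve^2\theta^4$; since $\sin^2\theta \ge 0$ this is not immediate, so instead I would note $\alpha \ge 1-\ve\cos^2\theta \ge 1-\ve \ge 1-\ve\theta^2$ fails for small $\theta$, so the right route is: $\alpha^2 = (1-\ve\cos^2\theta)^2 + \ve^2\cos^2\theta\sin^2\theta \ge (1-\ve\cos^2\theta)^2$, hence $\alpha \ge 1-\ve\cos^2\theta$, and then handle $\theta^2$ versus $\cos^2\theta$ — but $\cos^2\theta$ can be close to $1$ while $\theta^2$ is close to $0$. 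Hence the bound $\alpha \ge 1 - \ve\theta^2$ as literally stated seems too strong unless $\theta$ is meant to measure the angle differently; I suspect the intended statement has $\alpha \ge 1-\ve$ or the roles are arranged so that the relevant small quantity really is $\theta$ near $0$ being excluded, and I would reconcile this with how the Lemma is invoked.

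Assuming the statement is as intended in context, for the angle estimate $|\phi - \ve\theta| \le C\ve\theta^3$: since $\alpha\zeta = A_\ve\xi$ with $\zeta = (\cos\phi,\sin\phi)^\top$, we have $\tan\phi = \dfrac{-\ve\cos\theta\sin\theta}{1-\ve\cos^2\theta}$ up to sign conventions on orientation (I would fix the sign so $\phi\ge 0$, replacing $\theta$ by its appropriate signed version, or observe the construction only uses $|\phi|$). Then I would Taylor-expand: $\tan\phi = \ve\sin\theta\cos\theta\cdot(1-\ve\cos^2\theta)^{-1} = \ve\sin\theta\cos\theta(1 + O(\ve))$, and $\sin\theta\cos\theta = \theta + O(\theta^3)$, so $\tan\phi = \ve\theta + O(\ve\theta^3) + O(\ve^2\theta)$. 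Finally $\phi = \arctan(\tan\phi) = \tan\phi + O((\tan\phi)^3) = \ve\theta + O(\ve\theta^3)$, using $\ve \le 1/2$ to absorb the $O(\ve^2\theta)$ term into $O(\ve\theta^3)$? That absorption is false as $\theta\to 0$, so in fact the cleanest route is to keep the factor $(1-\ve\cos^2\theta)^{-1} = 1 + \ve\cos^2\theta + O(\ve^2)$ and note $\tan\phi - \ve\theta = \ve(\sin\theta\cos\theta - \theta) + \ve\theta\cdot\ve\cos^2\theta/(1-\ve\cos^2\theta)\cdot(\dots)$; the second piece is $O(\ve^2\theta)$, not $O(\ve\theta^3)$, so again the literal bound looks suspicious for small $\theta$. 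I expect that in the actual application $\ve$ and $\theta$ are linked (indeed $\phi\approx\ve\theta$ suggests iterating with $\theta$ of the same order as the accumulated $\ve$'s), so that $\ve = O(\theta)$ along the construction and all the $O(\ve^2\theta)$ terms are genuinely $O(\ve\theta^3)$ or smaller.

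So the key steps are: (1) expand $A_\ve\xi$ in coordinates and read off $\alpha^2$ in closed form; (2) derive $\alpha \ge 1-\ve\cos^2\theta$ from dropping the nonnegative $\sin^2\theta$ term, then compare with the claimed bound; (3) compute $\tan\phi$ in closed form; (4) Taylor-expand $\arctan$ and $\sin\theta\cos\theta$ and collect error terms, tracking constants to exhibit the absolute constant $C$. The main obstacle I anticipate is pinning down exactly which small quantity controls the error — reconciling the stated bounds $\ve\theta^2$ and $\ve\theta^3$ with the naive expansions, which suggests the Lemma is tailored to a regime (presumably $\ve\lesssim\theta$) coming from how it is chained in the proof of Proposition \ref{prop 2-dim}; once that regime is made explicit the estimates become routine calculus on $[0,\pi/3]$.
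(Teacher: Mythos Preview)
Your very first coordinate computation is wrong, and every subsequent difficulty stems from it. You wrote $A_\ve\xi = \xi - \ve(\cos\theta)\eta$, but
\[
A_\ve = P_\eta + (1-\ve)P_\eta^\perp = I - \ve P_\eta^\perp,
\]
so that $A_\ve\xi = \xi - \ve P_\eta^\perp\xi = (1-\ve)\xi + \ve(\cos\theta)\eta$, not $\xi - \ve P_\eta\xi$. In coordinates,
\[
A_\ve\xi = \begin{pmatrix} 1-\ve\sin^2\theta \\ \ve\cos\theta\sin\theta \end{pmatrix},
\]
with $\sin^2\theta$ where you have $\cos^2\theta$, and with the second entry positive. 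Once this is fixed, all of your doubts evaporate: one gets $\alpha^2 = (1-\ve s^2)^2 + \ve^2 s^2 c^2 \ge (1-\ve s^2)^2$, hence $\alpha \ge 1-\ve s^2 \ge 1-\ve\theta^2$ immediately; and
\[
\tan\phi = \frac{\ve\sin\theta\cos\theta}{1-\ve\sin^2\theta}
 = \ve\theta + O(\ve\theta^3) + O(\ve^2\theta^3),
\]
since the geometric-series correction is $\ve^2\sin^3\theta\cos\theta = O(\ve^2\theta^3)$, not the $O(\ve^2\theta)$ you were worried about. Because $\ve\le 1/2$, the $O(\ve^2\theta^3)$ term is absorbed into $O(\ve\theta^3)$, and $\phi = \arctan(\tan\phi)$ adds only another $O(\ve^3\theta^3) \subset O(\ve\theta^3)$. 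No auxiliary hypothesis like $\ve\lesssim\theta$ is needed; the lemma holds exactly as stated.

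Apart from this slip, your outline is essentially the paper's argument. The paper decomposes $\xi = c\eta + s\eta'$ in the orthonormal basis $(\eta,\eta')$ (which makes the action of $A_\ve$ diagonal and the computation of $\alpha$ a one-liner), and then extracts $\phi$ via $\cos\phi = \alpha^{-1}(1-\ve s^2)$ and an $\arccos$ expansion, rather than via $\tan\phi$ and $\arctan$ as you propose. Both routes are straightforward once the correct coordinates are in hand.
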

\begin{proof}
Write $c:=\cos\theta$ and $s:=\sin\theta\le\theta$. 
Set $\eta':=\left(\begin{smallmatrix} s \\ -c\end{smallmatrix}\right)$. Then 
$\xi=c \eta + s \eta'$ and 
\[
(P_\eta+(1-\ve)P_\eta^\perp)\xi
 = c \eta + (1-\ve)s \eta'
\]
Hence 
\begin{align*}
\alpha =(1-2\ve s^2+\ve^2s^2)^{1/2}
 &=((1-\ve s^2)^2+\ve^2s^2(1-s^2))^{1/2}\\
 &=(1-\ve s^2)(1+t)^{1/2} \geq 1-\ve \theta^2,
\end{align*}
where $t:=\ve^2s^2(1-s^2)/(1-\ve s^2)^2=\ve^2 s^2 + O(\ve^2 s^4)$. 
This proves the first assertion. 
Since $(1+t)^{1/2}=1+t/2+O(t^2)$ and $(1-u)^{-1}=1+u+O(u^2)$, 
\[
\alpha^{-1} = 1 + \ve s^2 - \frac{1}{2}\ve^2 s^2 + O(\ve^2 s^4).
\]
Thus
\[
\cos\phi = \alpha^{-1}(1-\ve s^2) = 1-\frac{1}{2}\ve^2s^2 + O(\ve^2 s^4)
 = 1-\frac{1}{2}\ve^2\theta^2 + O(\ve^2 \theta^4),
\]
by $s=\theta+O(\theta^3)$. 
Since $0\le \arccos(1-x) - (2x)^{1/2} \le x^{3/2}$ for $x\in[0,1]$, one has
\[
\phi=\ve\theta + O(\ve\theta^3).
\]
This proves the second assertion (in fact for $C = 10$ if the details are worked out).
\end{proof}

\begin{lemma}\label{beta}
Let $0<\ve<\ve+\ve^2<\kappa<1/2$, 
$\xi:=\left(\begin{smallmatrix} 1 \\ 0\end{smallmatrix}\right)$,  
and $\zeta:=\left(\begin{smallmatrix} \cos\phi \\ \sin\phi\end{smallmatrix}\right)$. 
Let $\beta\ge 0$ be the largest constant that satisfies 
\begin{equation}
P_\xi+(1-\ve)P_\xi^\perp
 \geq \beta (P_\zeta+(1-\kappa)P_\zeta^\perp).\label{eq betaP}
\end{equation}
Then  $1-2\phi^2\leq \beta \leq 1$. 
\end{lemma}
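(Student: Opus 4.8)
The plan is to diagonalize both sides of \eqref{eq betaP} and reduce to a $2\times 2$ eigenvalue computation. Write $a:=P_\xi+(1-\ve)P_\xi^\perp$ and $b:=P_\zeta+(1-\kappa)P_\zeta^\perp$; both are positive invertible contractions, so the largest $\beta$ with $a\ge\beta b$ is precisely $\beta=\lambda_{\min}(b^{-1/2}ab^{-1/2})$, the smallest eigenvalue of the positive matrix $c:=b^{-1/2}ab^{-1/2}$. Since $\det c=\det a/\det b=(1-\ve)/(1-\kappa)$ and $\operatorname{tr}c=\operatorname{tr}(ab^{-1})$, the two eigenvalues $\beta\le\beta'$ of $c$ satisfy $\beta\beta'=(1-\ve)/(1-\kappa)$ and $\beta+\beta'=\operatorname{tr}(ab^{-1})$.

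First I would establish the upper bound $\beta\le1$. This is immediate from taking the expectation against $\xi$: since $a\xi=\xi$ we get $1=\nai{a\xi}{\xi}\ge\beta\nai{b\xi}{\xi}$, and $\nai{b\xi}{\xi}=\cos^2\phi+(1-\kappa)\sin^2\phi=1-\kappa\sin^2\phi$, so $\beta\le(1-\kappa\sin^2\phi)^{-1}$; but this is $\ge1$, so I instead test against $\zeta$, where $b\zeta=\zeta$, obtaining $\beta\le\nai{a\zeta}{\zeta}=\cos^2(\text{angle between }\xi,\zeta)\cdot 1+\sin^2(\cdot)(1-\ve)=1-\ve\sin^2\phi\le1$. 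That settles $\beta\le1$.

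For the lower bound $\beta\ge1-2\phi^2$ I would compute $\operatorname{tr}(ab^{-1})$ explicitly. Write $b^{-1}=P_\zeta+(1-\kappa)^{-1}P_\zeta^\perp$ and $a=P_\xi+(1-\ve)P_\xi^\perp=(1-\ve)I+\ve P_\xi$, so $\operatorname{tr}(ab^{-1})=(1-\ve)\operatorname{tr}(b^{-1})+\ve\nai{b^{-1}\xi}{\xi}=(1-\ve)\bigl(1+(1-\kappa)^{-1}\bigr)+\ve\bigl(\cos^2\phi+(1-\kappa)^{-1}\sin^2\phi\bigr)$. From $\beta\beta'=(1-\ve)/(1-\kappa)$ and $\beta\le\beta'$ one gets $\beta\ge(1-\ve)/((1-\kappa)\beta')\ge(1-\ve)/((1-\kappa)\operatorname{tr}(ab^{-1}))$ after bounding $\beta'\le\operatorname{tr}(ab^{-1})$; alternatively, using $\beta=\frac12\bigl(\operatorname{tr}-\sqrt{\operatorname{tr}^2-4\det}\bigr)$ one has $\beta=\frac{2\det}{\operatorname{tr}+\sqrt{\operatorname{tr}^2-4\det}}\ge\frac{\det}{\operatorname{tr}}$. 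Plugging the formulas for $\det$ and $\operatorname{tr}$ in and expanding to second order in $\phi$ (using $\sin^2\phi=\phi^2+O(\phi^4)$ and the hypothesis $\ve+\ve^2<\kappa<1/2$ to control the coefficients) should yield $\beta\ge1-2\phi^2$ with room to spare. The main obstacle is the bookkeeping in this last expansion: one must check that the $O(\phi^2)$ coefficient is genuinely $\le2$ for \emph{all} admissible $\ve,\kappa$ in the stated range, rather than merely in the limit $\ve,\kappa\to0$; I expect this follows from the crude bound $\det/\operatorname{tr}\ge(1-\ve)(1-\kappa)^{-1}/\bigl((1-\ve)(2-\kappa)(1-\kappa)^{-1}\bigr)=1/(2-\kappa)\cdot\text{(correction)}$ combined with $\ve<\kappa$, keeping the $\phi$-dependence only where it matters.
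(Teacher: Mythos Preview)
Your setup and upper bound are fine: reducing to the smallest eigenvalue of $c=b^{-1/2}ab^{-1/2}$ is exactly what the paper does, and testing against $\zeta$ cleanly gives $\beta\le\nai{a\zeta}{\zeta}=1-\ve\sin^2\phi\le1$.

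The lower bound, however, has a genuine gap. Your proposed estimate $\beta\ge\det c/\operatorname{tr}c$ is simply too crude to yield $1-2\phi^2$. Check the case $\phi=0$: then $a$ and $b$ are simultaneously diagonal, the eigenvalues of $c$ are $1$ and $(1-\ve)/(1-\kappa)>1$, so the true value is $\beta=1$. But $\det c/\operatorname{tr}c=\dfrac{(1-\ve)/(1-\kappa)}{1+(1-\ve)/(1-\kappa)}=\dfrac{1-\ve}{2-\kappa-\ve}$, which is near $1/2$, not near $1$. Your own closing computation $\det/\operatorname{tr}\approx 1/(2-\kappa)$ already shows this. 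The bound $\beta\ge\det/\operatorname{tr}$ loses a factor of roughly $2$ because it ignores that the two eigenvalues of $c$ are well separated; no amount of $\phi$-expansion will recover the loss.

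What is missing is a sharper eigenvalue estimate that exploits the separation of the diagonal entries of $c$ relative to the off-diagonal. Writing $c=\left(\begin{smallmatrix}A & B\\ B & D\end{smallmatrix}\right)$ in the $\zeta$-basis, the paper uses $\sqrt{(A-D)^2+4B^2}\le |D-A|+\dfrac{2B^2}{|D-A|}$ and then bounds $|D-A|\ge(\kappa-\ve)/(1-\kappa)$ from below. The off-diagonal satisfies $B^2=(1-\kappa)^{-1}\ve^2\cos^2\phi\sin^2\phi$, so $\dfrac{2B^2}{|D-A|}\le\dfrac{2\ve^2\sin^2\phi}{\kappa-\ve}<2\sin^2\phi$ precisely because of the hypothesis $\kappa-\ve>\ve^2$. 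This is the step where that hypothesis actually enters, and it is the key idea your proposal does not reach. With this estimate in hand one gets $\beta=\tfrac12\bigl((A+D)-\sqrt{(A-D)^2+4B^2}\bigr)\ge 1-2\sin^2\phi\ge1-2\phi^2$.
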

\begin{proof}
Note that (\ref{eq betaP}) implies that $\beta P_{\zeta}$ is a positive contraction, whence $\beta \leq 1$ holds.   
We may exchange $\xi$ and $\zeta$. 

Write $c:=\cos\phi$ and $s:=\sin\phi$.
Thus, $\beta$ is the smaller of the eigenvalues of 
\begin{align*}
\left(\begin{smallmatrix} 1 & 0\\ 0 & (1-\kappa)^{-1/2}\end{smallmatrix}\right)
\left(\begin{smallmatrix} 1 - \ve s^2 & \ve cs \\ \ve cs & 1-\ve c^2\end{smallmatrix}\right)
\left(\begin{smallmatrix} 1 & 0\\ 0 & (1-\kappa)^{-1/2}\end{smallmatrix}\right)
 &= \left(\begin{smallmatrix} 1 - \ve s^2 & (1-\kappa)^{-1/2}\ve cs \\ (1-\kappa)^{-1/2}\ve cs & 1+(1-\kappa)^{-1}(\kappa-\ve+\ve s^2) \end{smallmatrix}\right).
\end{align*}
Since the eigenvalues of 
$\left(\begin{smallmatrix} a & b \\ b & d \end{smallmatrix}\right)$ 
are $((a+d) \pm ((a-d)^2+4b^2)^{1/2})/2$, we estimate for the above matrix 
\[
a+d = 2 +\frac{\kappa-\ve}{1-\kappa}+\frac{\kappa\ve s^2}{1-\kappa} 
 \ge 2 +\frac{\kappa-\ve}{1-\kappa}
\]
and, since 
$d-a=(1-\kappa)^{-1}(\kappa-\ve + (2-\kappa) \ve s^2)
 \geq(1-\kappa)^{-1}(\kappa-\ve)$,  
\begin{align*}
((a-d)^2+4b^2)^{1/2}
 &\le d-a + \frac{2b^2}{d-a}\\
 &\le \frac{\kappa-\ve}{1-\kappa} + \frac{(2-\kappa) \ve s^2}{1-\kappa}+ \frac{2\ve^2 c^2s^2}{\kappa-\ve}\\
 &\le \frac{\kappa-\ve}{1-\kappa}+4s^2.
\end{align*}
Thus $\beta\geq1-2s^2$. 
\end{proof}

\begin{proof}[Proof of Proposition~\ref{prop 2-dim}]
We may assume that $D:=2\delta^{-1}>8$ and 
set $\ve_k:=(D+n-k)^{-1}$ 
and $\theta_k:=(\log(D+n-k))^{-1}\theta$ for $k=0,1,\ldots,n-1$, 
where $n\in\IN$ and $\theta>0$ are chosen later. 
Note that $\ve_n\le \delta/2$.
We recursively define 
$\xi_k:=\left(\begin{smallmatrix} \cos\phi_k \\ \sin\phi_k\end{smallmatrix}\right)$, 
$\eta_k:=\left(\begin{smallmatrix} \cos\rho_k \\ \sin\rho_k\end{smallmatrix}\right)$, 
and $\alpha_k$ as follows. 
First, set $\phi_0:=0$ and $\rho_0:=\theta_0$. 
For $k=0,\ldots,n-1$,
set $\phi_{k+1}$ and $\alpha_{k+1}$ by the equation 
\[
(P_{\eta_k}+(1-\ve_k)P_{\eta_k}^\perp)\xi_k = \alpha_{k+1}\xi_{k+1}.
\]
Then, $\rho_{k+1}:=\phi_{k+1}+\theta_{k+1}$. 
Note that, by Lemma~\ref{alpha}, 
one has 
\[
\alpha_{k+1}\geq1-\ve_k\theta_k^2
\]
and
\[
|\phi_{k+1}-\phi_k - \ve_k\theta_k| \le C\ve_k\theta_k^3.
\]
Note also that we may assume (by taking $\theta$ sufficiently small) that $C\ve_k\theta_k^3\ll\ve_k\theta_k$, and in particular that $C\varepsilon_k\theta_k^3\le \frac{1}{2}\varepsilon_k\theta_k$ and $1-18\varepsilon_k\theta_k^2\ge \frac{1}{2}$ for all $k=0,\dots,n-1$.
We recall that $\log(\log(t))'=(t\log(t))^{-1}$. 
Thus, 
\[
\phi_n \approx \sum_{k=0}^{n-1}\ve_k\theta_k 
\approx \theta \log(\log n)
\]
as $n\to\infty$. 
Since $\phi_n$ depends continuously on $\theta$, by the intermediate value theorem, we may
set $n$ and $\theta$ to be such that $\phi_n=\pi/2$, 
i.e., $\xi_n=\left(\begin{smallmatrix} 0 \\ 1\end{smallmatrix}\right)$.
We use the inequality $t^{-1}(\log t)^{-2}\le \frac{3}{2}(t+1)^{-1}(\log(t+1))^{-2}, (t\ge 8)$ to obtain 
\eqa{
0&\le \theta_{k+1}-\theta_{k} \le (D+n-k-1)^{-1}(\log(D+n-k-1))^{-2}\theta\\\\
&\le \tfrac{3}{2}(D+n-k)^{-1}(\log (D+n-k))^{-2}\theta\le \tfrac{3}{2}\ve_k\theta_k.
}
Thus, one has 
\eqa{
0&\le\rho_{k+1}-\rho_k=\theta_{k+1}-\theta_k+\phi_{k+1}-\phi_k\\
 &\le \tfrac{3}{2}\ve_k\theta_k+\ve_k\theta_k+C\ve_k\theta_k^3\\
 &\le 3\ve_k\theta_k.
}

Set $\beta_k>0$ to be the largest constant that satisfies
\[
(P_{\eta_{k-1}}+(1-\ve_{k-1})P_{\eta_{k-1}}^\perp)\geq\beta_k(P_{\eta_k}+(1-\ve_k)P_{\eta_k}^\perp).
\]
Since 
$\ve_{k}-\ve_{k-1}=(D+n-k)^{-1}(D+1+n-k)^{-1}>\ve_{k-1}^2$, 
Lemma~\ref{beta} implies
\[
\beta_k \geq 1-2(\rho_k-\rho_{k-1})^2
 \geq 1-
18\ve_{k-1}^2\theta_{k-1}^2.
\]
Finally, set $\beta_0:=1$ and 
\[
T_k:=\beta_0\beta_1\cdots\beta_k(P_{\eta_k}+(1-\ve_k)P_{\eta_k}^\perp)
\le T_{k-1}.
\]
Then, one has 
\[
T_{n-1}\cdots T_0 \xi_0
 =(\prod_{k=1}^{n}\alpha_k)(\prod_{k=1}^{n-1}\beta_k^{n-k})\xi_n.
\]
For $B:=\int_D^\infty (t(\log t)^2)^{-1}\,\mathrm{d}t=(\log D)^{-1}$,
\[
\sum_{k=1}^n (1-\alpha_k) \le \sum_{k=0}^{n-1}\ve_k\theta_k^2
\le B\theta^2 
\]
and by $\varepsilon_k(n-k)\le 1$,
\[
\sum_{k=1}^{n-1}(1-\beta_k)(n-k)\le 18\sum_{k=0}^{n-2}\ve_k^2\theta_k^2(n-k)
\le 18\sum_{k=0}^{n-1}\varepsilon_k\theta_k^2\le 18B\theta^2. 
\]
Since we arranged $\theta$ sufficiently small so that $0\le 1-\alpha_{k}\le \frac{1}{2}$ and $0\le 1-\beta_k\le \frac{1}{2}$ for all $k=1,\dots,n$ we have 
$\log (\alpha_k)=\log (1-(1-\alpha_k))\ge -2(1-\alpha_k)$ and $\log(\beta_k)\ge -2(1-\beta_k)$ thanks to the inequality $\log (1-x)\ge -2x\,(0\le x\le \frac{1}{2})$. 
Therefore, 
\eqa{
\sum_{k=1}^n\log \alpha_k&\ge \sum_{k=1}^n-2(1-\alpha_k)\ge -2B\theta^2, one has
}
and similarly 
\eqa{
\sum_{k=1}^{n-1}\log(\beta_k^{n-k})\ge -2\sum_{k=1}^{n-1}(n-k)(1-\beta_k)\ge -32B\theta^2.
}

These imply that 
$(\prod_{k=1}^{n}\alpha_k)(\prod_{k=1}^{n-1}\beta_k^{n-k})\geq
\exp(-34B\theta^2)$. 

Since $\theta>0$ could have been arbitrarily small, we may have $\exp(-34B\theta)>1-\delta$. Then replacing $T_n$ with $\lambda T_n$, where $\lambda\in (0,1)$ is chosen such that $\lambda (\prod_{k=1}^{n}\alpha_k)(\prod_{k=1}^{n-1}\beta_k^{n-k})=1-\delta$, we are done. 
\end{proof}

We end this section with the following remarks. 
First, if $x_1,x_2,\dots$ is a sequence of (not necessarily positive) contractions on $H$, then we have a decreasing sequence $T_1\ge T_{2}\ge \dots$ of positive contractions, where 
\[T_n=y_n^*y_n,\,\,y_n=x_n\cdots x_1,\,\,n\in \N.\]
Actually, any $T_1\ge T_2\ge \dots$ is of this form:
\begin{proposition}\label{prop T_n as a product of x_n}
Let $T_1\ge T_2\ge \cdots$ be a decreasing sequence of positive contractions on $H$. Then there exist a sequence $(x_n)_{n=1}^{\infty}$ of contractions such that 
\[T_n=y_n^*y_n,\,\,y_n=x_n\cdots x_1,\,\,n\in \N.\]
Moreover, $W^*(T_1,T_2,\dots)=W^*(x_1,x_2,\dots)$ holds. 
\end{proposition}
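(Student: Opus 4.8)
\emph{Proof proposal.} The plan is to construct the contractions $x_n$ by recursion on $n$, maintaining two invariants at every stage: that $y_n^*y_n = T_n$, where $y_n := x_n\cdots x_1$, and that $x_n\in W^*(T_1,T_2,\dots)$. Granting this, the ``moreover'' clause is immediate, since the first invariant gives $T_n = y_n^*y_n\in W^*(x_1,\dots,x_n)$, hence $W^*(T_1,T_2,\dots)\subseteq W^*(x_1,x_2,\dots)$, while the reverse inclusion is precisely the second invariant. For $n=1$ I would take $x_1 := T_1^{1/2}$, so that $y_1 = x_1$, $y_1^*y_1 = T_1$, and $x_1\in W^*(T_1)$. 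For the inductive step it suffices to produce a \emph{positive} contraction $x_{n+1}$ with $y_n^*\,x_{n+1}^2\,y_n = T_{n+1}$, because then $y_{n+1} := x_{n+1}y_n$ automatically satisfies $y_{n+1}^*y_{n+1} = T_{n+1}$, and positivity reduces the contraction condition to $\|x_{n+1}^2\|\le 1$.

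The analytic heart is a Douglas-type factorization, carried out inside the von Neumann algebra. Since $0\le T_{n+1}\le T_n$, I claim there is a positive contraction $C_{n+1}\in W^*(T_n,T_{n+1})$, supported on $\overline{\ran T_n}$, with $T_n^{1/2}\,C_{n+1}\,T_n^{1/2} = T_{n+1}$. To build it, define a positive Hermitian form $q$ on $\ran(T_n^{1/2})$ by $q(T_n^{1/2}\xi,T_n^{1/2}\eta) := \langle T_{n+1}\xi,\eta\rangle$; this is well defined because $\ker T_n\subseteq\ker T_{n+1}$ (which follows from $0\le T_{n+1}\le T_n$) and bounded by $1$ because $\langle T_{n+1}\xi,\xi\rangle\le\langle T_n\xi,\xi\rangle = \|T_n^{1/2}\xi\|^2$. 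Let $C_{n+1}$ be the positive contraction on $\overline{\ran T_n}$ represented by $q$, extended by $0$ on $\ker T_n$; then $T_n^{1/2}C_{n+1}T_n^{1/2} = T_{n+1}$ and $Q_nC_{n+1}Q_n = C_{n+1}$, where $Q_n := 1_{(0,\infty)}(T_n)$. Finally, $C_{n+1}\in W^*(T_n,T_{n+1})$ because it is the strong-operator limit, as $\varepsilon\downarrow 0$, of the operators $(T_n+\varepsilon)^{-1/2}T_{n+1}(T_n+\varepsilon)^{-1/2}$, using that $\bigl(t/(t+\varepsilon)\bigr)^{1/2}\to 1_{(0,\infty)}(t)$ boundedly.

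The second ingredient is the polar decomposition $y_n = u_n|y_n| = u_nT_n^{1/2}$ (legitimate because $|y_n| = (y_n^*y_n)^{1/2} = T_n^{1/2}$ by the inductive hypothesis), where $u_n$ is a partial isometry with $u_n^*u_n = Q_n$ and $u_n\in W^*(y_n)\subseteq W^*(x_1,\dots,x_n)\subseteq W^*(T_1,T_2,\dots)$. I then set $x_{n+1} := (u_nC_{n+1}u_n^*)^{1/2}$, a positive contraction in $W^*(T_1,T_2,\dots)$. Using $y_n^* = T_n^{1/2}u_n^*$, $u_n^*u_n = Q_n$, and $Q_nC_{n+1}Q_n = C_{n+1}$, one computes
\[
y_n^*\,x_{n+1}^2\,y_n = T_n^{1/2}u_n^*\,(u_nC_{n+1}u_n^*)\,u_nT_n^{1/2} = T_n^{1/2}\,C_{n+1}\,T_n^{1/2} = T_{n+1},
\]
which closes the recursion and preserves both invariants; the statement follows.

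I do not foresee a real obstacle. The only substantive step is the Douglas-type lemma of the second paragraph; the remainder is bookkeeping whose point is to route the construction through the support projections $Q_n$ and the partial isometries $u_n$, so that the possible non-invertibility of the $T_n$ is harmless and so that every operator produced visibly remains inside $W^*(T_1,T_2,\dots)$.
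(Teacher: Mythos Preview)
Your argument is correct. Both you and the paper pivot on a Douglas--type factorization inside $W^*(T_n,T_{n+1})$, but you apply it differently. The paper factors at the \emph{square-root level}: from $0\le T_{n+1}\le T_n$ it produces a contraction $x_{n+1}\in W^*(T_n,T_{n+1})$ with $T_{n+1}^{1/2}=x_{n+1}T_n^{1/2}$; this relation telescopes to give $y_n=T_n^{1/2}$ outright, so $y_n^*y_n=T_n$ is immediate and no polar decomposition is ever needed. You instead factor at the \emph{operator level}, obtaining $T_n^{1/2}C_{n+1}T_n^{1/2}=T_{n+1}$, and then compensate with the polar decomposition $y_n=u_nT_n^{1/2}$ to conjugate $C_{n+1}$ into $x_{n+1}^2=u_nC_{n+1}u_n^*$. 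Your route is longer but buys something: your $x_n$ are \emph{positive}, which the paper's construction does not give (its $x_{n+1}$ is merely a contraction). Conversely, the paper's route is shorter and has the pleasant byproduct that the partial products $y_n$ are themselves positive, equal to $T_n^{1/2}$.
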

The following lemma is well-known. We include the proof for completeness. 
\begin{lemma}\label{lem S=xT}Let $S,T\in \mathbb{B}(H)$ be such that $0\le S\le T$. 
Then there exists a contraction $x\in W^*(S,T)$ such that $S^{\frac{1}{2}}=xT^{\frac{1}{2}}$. 
\end{lemma}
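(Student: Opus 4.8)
The plan is to define $x$ on the range of $T^{1/2}$ by the formula $x(T^{1/2}\xi) := S^{1/2}\xi$ and to check that this is well-defined and bounded by $1$, then extend by continuity and zero. The inequality $0\le S\le T$ gives $\|S^{1/2}\xi\|^2 = \ip{S\xi,\xi}\le \ip{T\xi,\xi} = \|T^{1/2}\xi\|^2$ for all $\xi$, which simultaneously shows that $T^{1/2}\xi=0$ implies $S^{1/2}\xi=0$ (well-definedness) and that the map is a contraction on $\ran(T^{1/2})$. Extending continuously to $\overline{\ran(T^{1/2})}$ and setting $x=0$ on $\ker(T^{1/2})=\ran(T^{1/2})^{\perp}$ yields a contraction $x\in\mathbb{B}(H)$ with $S^{1/2}=xT^{1/2}$ by construction.

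The remaining point is that $x$ can be chosen in $W^*(S,T)$; this is where a little care is needed, since the naive continuous extension only gives $x\in\mathbb{B}(H)$. One clean way is to replace $T^{1/2}$ by $(T+\tfrac1n)^{1/2}$, which is invertible, and set $x_n := S^{1/2}(T+\tfrac1n)^{-1/2}\in W^*(S,T)$; each $x_n$ is a contraction because $(T+\tfrac1n)^{-1/2}S(T+\tfrac1n)^{-1/2}\le (T+\tfrac1n)^{-1/2}T(T+\tfrac1n)^{-1/2}\le I$. Since $W^*(S,T)$ is WOT-closed and the unit ball is WOT-compact, a subnet of $(x_n)$ converges WOT to some contraction $x\in W^*(S,T)$. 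One then checks $xT^{1/2}=S^{1/2}$: on $\ran(T^{1/2})$ write a vector as $T^{1/2}\xi$ and note $x_n T^{1/2}\xi = S^{1/2}(T+\tfrac1n)^{-1/2}T^{1/2}\xi = S^{1/2}g_n(T)\xi$ where $g_n(t)=t^{1/2}(t+\tfrac1n)^{-1/2}\to 1_{(0,\infty)}(t)$ boundedly, so $x_nT^{1/2}\xi\to S^{1/2}\xi$ in norm (using $S^{1/2}1_{\{0\}}(T)=0$, which follows from $S\le T$); combined with WOT-convergence $x_nT^{1/2}\xi\to xT^{1/2}\xi$ this gives $xT^{1/2}\xi=S^{1/2}\xi$ on a dense subspace of $\overline{\ran(T^{1/2})}$, and both sides vanish on $\ker(T^{1/2})$, so $xT^{1/2}=S^{1/2}$.

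The main obstacle, and the only step requiring thought, is precisely the membership $x\in W^*(S,T)$ rather than merely $x\in\mathbb{B}(H)$: the obvious contraction extending $T^{1/2}\xi\mapsto S^{1/2}\xi$ need not a priori lie in the von Neumann algebra, and one must produce it as a limit of elements manifestly in $W^*(S,T)$ (equivalently, observe that $x$ commutes with every unitary commuting with both $S$ and $T$, hence lies in the double commutant). Everything else — well-definedness, the contraction bound, and the identity $S^{1/2}=xT^{1/2}$ — is a direct consequence of $0\le S\le T$ via the inequality $\ip{S\xi,\xi}\le\ip{T\xi,\xi}$ and routine approximation.
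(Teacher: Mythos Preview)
Your construction of $x$ on $\overline{\ran(T^{1/2})}\oplus\ker(T^{1/2})$ is identical to the paper's. The one point of divergence is how you establish $x\in W^*(S,T)$: the paper argues directly via the double commutant theorem, checking that the constructed $x$ commutes with every $y'\in W^*(S,T)'$ (separately on $\ran(T^{1/2})$ and on $\ker(T^{1/2})$), whereas your primary argument approximates by the manifestly-in-the-algebra contractions $x_n=S^{1/2}(T+\tfrac1n)^{-1/2}$ and extracts a WOT cluster point. Both arguments are correct; yours has the advantage of exhibiting $x$ as an explicit limit of elements of $W^*(S,T)$ (and in fact the full sequence $x_n$ converges SOT to $x$, so no subnet is really needed), while the paper's commutant computation is shorter and avoids any compactness or convergence verification. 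You even mention the paper's approach parenthetically, so there is no gap here.
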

\begin{proof} 
Set $\mathscr{M}=W^*(S,T)=W^*(S^{\frac{1}{2}},T^{\frac{1}{2}})$. 
Consider the orthogonal decomposition $H=\overline{\rm ran}(T^{\frac{1}{2}})\oplus \ker{T^{\frac{1}{2}}}$. We first define $x_0\colon {\rm ran}(T^{\frac{1}{2}})\to H$ by $x_0(T^{\frac{1}{2}}\xi)=S^{\frac{1}{2}}\xi$ for $\xi\in H$. Note that by $0\le S\le T$, we have $\|S^{\frac{1}{2}}\xi\|\le \|T^{\frac{1}{2}}\xi\|$, so that $x_0$ is a well-defined contraction. Thus, it extends to a contraction from $\overline{\rm ran}(T^{\frac{1}{2}})$ to $H$, still denoted by $x_0$. We then define $x$ by setting $x_0$ on $\overline{\rm ran}(T^{\frac{1}{2}})$ and 0 on $\ker(T^{\frac{1}{2}})$. Then $S^{\frac{1}{2}}=xT^{\frac{1}{2}}$ holds. 
We show that $x\in \mathscr{M}$. Let $y'$ be an element in the commutant $\mathscr{M}'$ of $\mathscr{M}$. For each $\xi\in H$, we have
$y'x_0(T^{\frac{1}{2}}\xi)=y'S^{\frac{1}{2}}\xi=S^{\frac{1}{2}}y'\xi=x_0(T^{\frac{1}{2}}y'\xi)=x_0y'(T^{\frac{1}{2}}\xi)$. 
Therefore, $y'x_0=x_0y'$ on $\overline{\rm ran}(T^{\frac{1}{2}})$. On the other hand, if $\xi\in \ker(T^{\frac{1}{2}})$, then $y'\xi\in \ker(T^{\frac{1}{2}})$ by $T^{\frac{1}{2}}y'=y'T^{\frac{1}{2}}$, whence $y'x\xi=xy'\xi=0$. This shows that $y'x=y'x$. Therefore, we obtain $x\in \mathscr{M}''=\mathscr{M}$. 
\end{proof}
\begin{proof}[Proof of Proposition \ref{prop T_n as a product of x_n}]
Set $x_1=T_1^{\frac{1}{2}}$. 
For each $n\in \N$, we may apply Lemma \ref{lem S=xT} to $0\le T_{n+1}\le T_n$ to find a contraction $x_{n+1}\in W^*(T_{n+1},T_n)$ such that $T_{n+1}^{\frac{1}{2}}=x_{n+1}T_n^{\frac{1}{2}}$. 
Then $T_1=x_1^*x_1$, and for $n\ge 2$, 
\eqa{T_{n}&=T_{n-1}^{\frac{1}{2}}x_{n}^*x_{n}T_{n-1}^{\frac{1}{2}}\\
&=T_{n-2}^{\frac{1}{2}}x_{n-1}^*x_{n}^*x_{n}x_{n-1}T_{n-2}^{\frac{1}{2}}\\
&=\cdots =x_1^*\cdots x_{n}^*x_{n}\cdots x_1\\
&=y_{n}^*y_{n},
}
where $y_n=x_n\cdots x_1\,\,(n\in \N)$. Finally, since $T_n=y_n^*y_n\in W^*(x_1,x_2,\dots)\,(n\in \N)$, $W^*(T_1,T_2,\dots)\subset W^*(x_1,x_2,\dots)$ holds.  
On the other hand, $x_1=T_1^{\frac{1}{2}}\in W^*(T_1,T_2,\dots)$
and $x_{n+1}\in W^*(T_{n+1}^{\frac{1}{2}},T_n^{\frac{1}{2}})\subset W^*(T_1,T_2,\dots)\,(n\in \N)$ implies that $W^*(x_1,x_2,\dots)\subset W^*(T_1,T_2,\dots)$. Therefore, $W^*(T_1,T_2,\dots)=W^*(x_1,x_2,\dots)$ holds. 
\end{proof}

Next, we show that any von Neumann algebra arises in the form of $W^*(T_1,T_2,\dots)$. Thus, the finiteness assumption of $W^*(T_1,T_2,\dots)$ in \cite{AMPaszkiewicz} is indeed quite restrictive. 
\begin{proposition}\label{prop any M can be realized}
Let $\mathscr{M}$ be a von Neumann algebra on $H$. Then there exist positive contractions $T_1\ge T_2\ge \dots$ such that $\mathscr{M}=W^*(T_1,T_2,\dots)$ holds. 
\end{proposition}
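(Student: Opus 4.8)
The plan is to combine the standard fact that a von Neumann algebra on a separable Hilbert space is countably generated with a recursive ``congruence by square roots'' construction. First I would exploit the separability of $H$: the closed unit ball of $\mathscr{M}$ equipped with the weak operator topology is a WOT-compact, metrizable, hence separable space, so it contains a WOT-dense sequence; splitting each member of that sequence into its real and imaginary parts and relabelling yields self-adjoint contractions $a_1, a_2, \dots$ with $\mathscr{M} = W^*(a_1, a_2, \dots)$. Setting $c_n := \tfrac{1}{2} I + \tfrac{1}{4} a_n$ produces positive contractions with $\tfrac{1}{4} I \le c_n \le \tfrac{3}{4} I$; since $a_n = 4 c_n - 2 I$ we still have $W^*(c_1, c_2, \dots) = \mathscr{M}$, and --- crucially --- each $c_n$ is invertible in $\mathbb{B}(H)$.

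Next I would build the decreasing sequence recursively by $T_0 := I$ and $T_n := T_{n-1}^{1/2} c_n T_{n-1}^{1/2}$. A one-line induction using $0 \le c_n \le I$ gives $0 \le T_n \le T_{n-1} \le I$, so $(T_n)_{n \ge 1}$ is a decreasing sequence of positive contractions; and $c_n \ge \tfrac{1}{4} I$ forces $T_n \ge \tfrac{1}{4} T_{n-1}$, hence $T_n \ge 4^{-n} I$, so every $T_n$ is boundedly invertible. It then remains to check $W^*(T_1, T_2, \dots) = W^*(c_1, c_2, \dots)$. The inclusion $\subseteq$ is immediate by induction, since $T_1 = c_1$ and $T_n = T_{n-1}^{1/2} c_n T_{n-1}^{1/2} \in W^*(c_1, \dots, c_n)$. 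For $\supseteq$: again $c_1 = T_1$, and for $n \ge 2$ the spectrum $\sigma(T_{n-1})$ lies in $[4^{-(n-1)}, 1]$, so $t \mapsto t^{-1/2}$ is bounded and continuous near $\sigma(T_{n-1})$ and hence $T_{n-1}^{-1/2} \in W^*(T_{n-1})$; then $c_n = T_{n-1}^{-1/2} T_n T_{n-1}^{-1/2} \in W^*(T_{n-1}, T_n)$. This gives $\mathscr{M} = W^*(T_1, T_2, \dots)$, as desired.

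I expect the only genuinely delicate point to be ensuring that every $T_n$ stays boundedly invertible; without this the congruence $T_n = T_{n-1}^{1/2} c_n T_{n-1}^{1/2}$ is not reversible and the two generated algebras could differ. This is precisely what the normalization $c_n \ge \tfrac{1}{4} I$ takes care of --- any fixed positive lower bound on each individual $c_n$ would do, even one that degrades with $n$. Everything else (the decreasing and contraction properties, and the two inclusions) is a routine induction, and the countable generation of $\mathscr{M}$ is the only place where separability of $H$ is used, in contrast with the proof of the Paszkiewicz conjecture above.
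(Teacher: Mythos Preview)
Your proof is correct and follows essentially the same strategy as the paper's: obtain countably many invertible positive contractions generating $\mathscr{M}$, assemble a decreasing sequence from them by iterated congruence, and exploit invertibility to recover the generators from the $T_n$. The paper's version takes projection generators $e_n$, sets $x_n=\tfrac12(e_n+1)$ and $T_n=y_n^*y_n$ with $y_n=x_n\cdots x_1$, so that the recovery formula is $x_{n+1}=\bigl((y_n^*)^{-1}T_{n+1}y_n^{-1}\bigr)^{1/2}$ rather than your $c_n=T_{n-1}^{-1/2}T_nT_{n-1}^{-1/2}$, but the architecture---and the key role of bounded invertibility that you rightly emphasize---is identical.
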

\begin{proof}
Since $H$ is separable, there exists a countable family of projections $e_1,e_2,\dots$ in $\mathbb{B}(H)$ such that $\mathscr{M}=W^*(e_1,e_2,\dots)$. Then $x_n=\tfrac{1}{2}(e_n+1),\,(n\in \N)$ is an invertible contraction and $W^*(x_1,x_2,\dots)=W^*(e_1,e_2,\dots)=\mathscr{M}$. Let $T_n=y_n^*y_n$, $y_n=x_n\cdots x_1$. Then $T_1\ge T_2\ge \cdots$ is a decreasing sequence of positive contractions on $H$ and $\tilde{\mathscr{M}}:=W^*(T_1,T_2,\dots)\subset \mathscr{M}$ holds. Since all $x_1,x_2,\cdots$ are invertible, so are $y_1,y_2,\dots$. We have $x_1=T_1^{\frac{1}{2}}\in \tilde{\mathscr{M}}$. Assume we have shown that $x_1,\dots x_n\in \tilde{\mathscr{M}}$. 
Then $y_1,\dots,y_n\in \tilde{\mathscr{M}}$, whence $T_{n+1}=y_n^
*x_{n+1}^2y_n$ implies $x_{n+1}=((y_n^*)^{-1}T_{n+1}y_n^{-1})^{\frac{1}{2}}\in \tilde{\mathscr{M}}$. This shows that $\mathscr{M}\subset \tilde{\mathscr{M}}$.  
Therefore, $\tilde{\mathscr{M}}=\mathscr{M}$ holds. 
\end{proof}

\section{Generalization of the Paszkiewicz conjecture}\label{sec generalized Paszkiewicz conjecture}
 In this section, we consider a generalization of the Paszkiewicz conjecture. 
 
\begin{definition}
A map $\sigma\colon \N\to \N$ is called proper, if for every $k\in \N$, the set $\sigma^{-1}(\{k\})$ is finite. The set of all proper maps from $\N$ to itself is denoted by $\mathscr{S}$. 
\end{definition}
\begin{remark}
A map $\sigma\colon \N\to \N$ is proper if and only if $\disp \lim_{n\to \infty}\sigma(n)=\infty$, i.e., for every $N\in \N$, there exists $n_0\in \N$ such that $\sigma(n)>N$ for every $n\ge n_0$. 
\end{remark} 
Now for a proper map $\sigma\colon \N\to \N$, we set
\[S_n^{\sigma}=T_{\sigma(n)}\dots T_{\sigma(1)}.\]
 
By $\disp \lim_{n\to \infty}\sigma(n)=\infty$, $\disp \lim_{n\to \infty}T_{\sigma(n)}=T$ (SOT) holds.
Then we have the following analogue of \cite[Proposition 2.3]{AMPaszkiewicz}. The proof is essentially the same, so we do not repeat it here.  

\begin{proposition}\label{prop: S_nsigma*converges}Let $T_1\ge T_2\ge \cdots$ be a sequence of positive contractions on $H$, and let $S_n^{\sigma}:=T_{\sigma(n)}\cdots T_{\sigma(1)}$. 
    The following statements hold (WOT stands for the weak operator topology): 
    \begin{list}{}{}
    \item[{\rm{(1)}}] $\disp \lim_{n\to \infty}(S_n^{\sigma})^*=P$ {\rm (SOT)}. In particular, $\disp \lim_{n\to \infty}S_n^{\sigma}=P$ {\rm (WOT)} holds.
    \item[{\rm{(2)}}] Let $\xi\in H$. If the set $\{S_n^{\sigma}\xi\mid\,n\in \mathbb{N}\}$ is totally bounded, then $\disp \lim_{n\to \infty}\|S_n^{\sigma}\xi-P\xi\|=0$ holds. 
    \item[{\rm{(3)}}] For every $\xi\in H$ and every $k\in \mathbb{N}$, $\displaystyle \lim_{n\to \infty}\|S_{n+k}^{\sigma}\xi-S_n^{\sigma}\xi\|=0$ holds. 
    \end{list}
    \end{proposition}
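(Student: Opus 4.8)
The three assertions parallel \cite[Proposition 2.3]{AMPaszkiewicz}, so the plan is to isolate three elementary ingredients and deduce each part quickly. First, since each $T_{\sigma(n+1)}$ is a positive contraction, $\|S_{n+1}^{\sigma}\xi\|=\|T_{\sigma(n+1)}S_n^{\sigma}\xi\|\le\|S_n^{\sigma}\xi\|$, so $(\|S_n^{\sigma}\xi\|)_n$ is nonincreasing and hence convergent. Second, since $\sigma$ is proper, $\sigma(n-i)\to\infty$ for each fixed $i\ge0$, hence $T_{\sigma(n-i)}\to T$ (SOT) as $n\to\infty$; as finite products of uniformly bounded strongly convergent sequences of operators are strongly convergent, any fixed-length product $T_{\sigma(n-j+1)}\cdots T_{\sigma(n)}$ tends to $T^{j}$ (SOT) as $n\to\infty$. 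Third, by \cite[Lemma 2.2]{AMPaszkiewicz} we have $T_mP=PT_m=P$ for all $m$, so $S_n^{\sigma}P=P$ and $P(S_n^{\sigma})^{*}=(S_n^{\sigma})^{*}P=P$, and the spectral computation behind Example~\ref{ex easy case}(1) gives $T^{2m}\to P$ (SOT) as $m\to\infty$.

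For (3) I would invoke the operator inequality $(I-A)^{2}\le I-A^{2}$, valid for every positive contraction $A$ since $(I-A^{2})-(I-A)^{2}=2A(I-A)\ge0$. Applying it with $A=T_{\sigma(n+1)}$ to $\zeta_n:=S_n^{\sigma}\xi$ gives
\[
\|S_{n+1}^{\sigma}\xi-S_n^{\sigma}\xi\|^{2}=\langle(I-T_{\sigma(n+1)})^{2}\zeta_n,\zeta_n\rangle\le\langle(I-T_{\sigma(n+1)}^{2})\zeta_n,\zeta_n\rangle=\|S_n^{\sigma}\xi\|^{2}-\|S_{n+1}^{\sigma}\xi\|^{2},
\]
which tends to $0$ by the first ingredient. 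For general $k$ one telescopes: $\|S_{n+k}^{\sigma}\xi-S_n^{\sigma}\xi\|\le\sum_{i=1}^{k}\|S_{n+i}^{\sigma}\xi-S_{n+i-1}^{\sigma}\xi\|$, and each of these finitely many summands is bounded by $(\|S_{n+i-1}^{\sigma}\xi\|^{2}-\|S_{n+i}^{\sigma}\xi\|^{2})^{1/2}\to0$ as $n\to\infty$.

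The heart of the matter, and the step I expect to be the main obstacle, is (1). Here I would study $A_n:=S_n^{\sigma}(S_n^{\sigma})^{*}$, which obeys the palindromic recursion $A_{n+1}=T_{\sigma(n+1)}A_nT_{\sigma(n+1)}$ with $A_0=I$, and satisfies $\langle A_n\xi,\xi\rangle=\|(S_n^{\sigma})^{*}\xi\|^{2}$. Iterating the recursion, $A_n=B_{n,j}^{*}A_{n-j}B_{n,j}\le B_{n,j}^{*}B_{n,j}$ for $n\ge j$, where $B_{n,j}:=T_{\sigma(n-j+1)}\cdots T_{\sigma(n)}$. By the second ingredient $B_{n,j}\to T^{j}$ (SOT) as $n\to\infty$, so $\limsup_{n}\|(S_n^{\sigma})^{*}\xi\|^{2}\le\|T^{j}\xi\|^{2}$ for every $j$; since $\|T^{j}\xi\|^{2}=\langle T^{2j}\xi,\xi\rangle\to\|P\xi\|^{2}$ as $j\to\infty$ by the third ingredient, this gives $\limsup_{n}\|(S_n^{\sigma})^{*}\xi\|^{2}\le\|P\xi\|^{2}$. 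For the matching lower bound, $P(S_n^{\sigma})^{*}P^{\perp}=PP^{\perp}=0$ shows that $(S_n^{\sigma})^{*}\xi=P\xi\oplus(S_n^{\sigma})^{*}P^{\perp}\xi$ is an orthogonal decomposition, so $\|(S_n^{\sigma})^{*}\xi\|^{2}\ge\|P\xi\|^{2}$ for all $n$. Hence $\|(S_n^{\sigma})^{*}\xi\|\to\|P\xi\|$, which forces $\|(S_n^{\sigma})^{*}P^{\perp}\xi\|\to0$, i.e.\ $(S_n^{\sigma})^{*}\xi\to P\xi$; as $\xi\in H$ is arbitrary and $\|(S_n^{\sigma})^{*}\|\le1$, this means $(S_n^{\sigma})^{*}\to P$ (SOT), and passing to adjoints, $S_n^{\sigma}\to P$ (WOT).

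Finally, (2) is a soft consequence of (1): if $\{S_n^{\sigma}\xi:n\in\N\}$ is totally bounded, its norm-closure is compact, so every subsequence of $(S_n^{\sigma}\xi)_n$ admits a norm-convergent sub-subsequence, whose limit must coincide with the weak limit $P\xi$ provided by (1); therefore $S_n^{\sigma}\xi\to P\xi$ in norm. The only genuinely structural input in the whole argument is the $\limsup$ estimate in (1), where the palindromic recursion for $A_n$ is combined with the properness of $\sigma$; the rest follows \cite[Proposition 2.3]{AMPaszkiewicz} closely enough that it need not be rewritten.
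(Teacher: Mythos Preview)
Your proof is correct and matches the paper's approach: the paper does not give an independent argument here but simply remarks that the proof is essentially the same as \cite[Proposition 2.3]{AMPaszkiewicz}, which is precisely the reference you are adapting. Your explicit write-up---the decreasing norms, the palindromic recursion $A_n=B_{n,j}^{*}A_{n-j}B_{n,j}$ combined with $B_{n,j}\to T^{j}$ (SOT), the orthogonal splitting $(S_n^{\sigma})^{*}\xi=P\xi\oplus (S_n^{\sigma})^{*}P^{\perp}\xi$, the inequality $(I-A)^{2}\le I-A^{2}$, and the compactness argument for (2)---is exactly the intended proof with $\sigma$ inserted, and all steps are valid.
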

Next, we introduce the $\sigma$-Paszkiewicz subspace $H_{\sigma}$. 
\begin{proposition}\label{prop Psubspace}
Let $T_1\ge T_2\ge \dots $ be positive contractions on $H$ and let $\sigma\in \mathscr{S}$. Then the set 
$$H_{\sigma}:=\left \{\xi \in H\,\middle|\, \lim_{n\to \infty}\|S_n^{\sigma}\xi-P\xi\|=0\right \}=\left \{\xi\in H\,\middle|\, \lim_{n\to \infty}\|S_n^{\sigma}\xi\|=\|P\xi\|\right \}$$
is a closed subspace of $H$ containing $P(H)$. 
\end{proposition}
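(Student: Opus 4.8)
The plan is to establish, in order: first that the two displayed sets coincide; second that $P(H)\subseteq H_\sigma$; third that $H_\sigma$ is a linear subspace; and fourth that it is norm-closed. The only structural input needed is the identity $S_n^{\sigma}P=PS_n^{\sigma}=P$ for every $n$, which follows from $T_mP=PT_m=P$ (noted already in the proof of the Paszkiewicz conjecture, using $T_m\ge T$ and $P=1_{\{1\}}(T)$), together with the fact that each $S_n^{\sigma}=T_{\sigma(n)}\cdots T_{\sigma(1)}$ is a contraction, being a product of positive contractions.

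For the equality of the two sets, I would fix $\xi\in H$ and use $PS_n^{\sigma}\xi=P\xi$ to write the orthogonal decomposition $S_n^{\sigma}\xi=P\xi+(I-P)S_n^{\sigma}\xi$, where $(I-P)S_n^{\sigma}\xi=S_n^{\sigma}\xi-P\xi$. The Pythagorean identity then gives $\|S_n^{\sigma}\xi\|^2=\|P\xi\|^2+\|S_n^{\sigma}\xi-P\xi\|^2$, so $\|S_n^{\sigma}\xi\|\to\|P\xi\|$ exactly when $\|S_n^{\sigma}\xi-P\xi\|\to 0$; this is also consistent with Proposition~\ref{prop: S_nsigma*converges}(1). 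The containment $P(H)\subseteq H_\sigma$ is immediate, since for $\xi=P\xi$ we get $S_n^{\sigma}\xi=S_n^{\sigma}P\xi=P\xi$ for all $n$. Linearity is equally routine: closure under scalar multiples is clear, and for $\xi,\eta\in H_\sigma$ the triangle inequality applied to $S_n^{\sigma}(\xi+\eta)-P(\xi+\eta)=(S_n^{\sigma}\xi-P\xi)+(S_n^{\sigma}\eta-P\eta)$ shows $\xi+\eta\in H_\sigma$.

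For closedness, the uniform bound $\|S_n^{\sigma}\|\le 1$ does the work. Given a sequence $\xi_k\in H_\sigma$ with $\xi_k\to\xi$ and an $\ve>0$, I would pick $k$ with $\|\xi-\xi_k\|<\ve/2$ and estimate $\|S_n^{\sigma}\xi-P\xi\|\le\|S_n^{\sigma}(\xi-\xi_k)\|+\|S_n^{\sigma}\xi_k-P\xi_k\|+\|P(\xi-\xi_k)\|\le\|\xi-\xi_k\|+\|S_n^{\sigma}\xi_k-P\xi_k\|+\|\xi-\xi_k\|$; letting $n\to\infty$ gives $\limsup_n\|S_n^{\sigma}\xi-P\xi\|\le\ve$, and since $\ve$ was arbitrary, $\xi\in H_\sigma$. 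I do not expect a genuine obstacle here: the one step requiring a moment's care is the equality of the two descriptions, where it is essential to observe that $S_n^{\sigma}\xi-P\xi$ lies in $(I-P)H$ and is therefore orthogonal to $P\xi$, so that convergence of norms upgrades to norm convergence of $S_n^{\sigma}\xi$ to $P\xi$.
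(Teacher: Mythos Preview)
Your proof is correct, and in fact slightly more self-contained than the paper's. The paper establishes the equality of the two descriptions by invoking the weak convergence $S_n^{\sigma}\to P$ from Proposition~\ref{prop: S_nsigma*converges}(1) and expanding $\|S_n^{\sigma}\xi-P\xi\|^2$; you instead observe directly that $PS_n^{\sigma}=P$, so that $S_n^{\sigma}\xi-P\xi\in P^{\perp}(H)$ and the Pythagorean identity $\|S_n^{\sigma}\xi\|^2=\|P\xi\|^2+\|S_n^{\sigma}\xi-P\xi\|^2$ gives the equivalence without any appeal to weak convergence. For closedness, the paper shows $(S_n^{\sigma}\xi)_n$ is Cauchy and then invokes Proposition~\ref{prop: S_nsigma*converges}(2) to identify the limit as $P\xi$, whereas your three-term triangle estimate $\|S_n^{\sigma}\xi-P\xi\|\le\|\xi-\xi_k\|+\|S_n^{\sigma}\xi_k-P\xi_k\|+\|\xi-\xi_k\|$ handles both convergence and identification of the limit in one step, using only $\|S_n^{\sigma}\|\le1$ and $\|P\|\le1$. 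Your route has the advantage of not depending on Proposition~\ref{prop: S_nsigma*converges} at all; the paper's route has the minor advantage that it only uses $S_n^{\sigma}P=P$ (not $PS_n^{\sigma}=P$), though of course both identities follow from $T_mP=PT_m=P$.
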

\begin{proof}
It is clear that $H_{\sigma}$ is a vector subspace of $H$ and that $P(H)\subset H_{\sigma}$ because $S_n^{\sigma}P\xi=P\xi$ for every $\xi\in H$ and $n\in \N$. Let $\xi\in \overline{H_{\sigma}}$ and $\varepsilon>0$. Then there exists $\xi_0\in H_{\sigma}$ such that $\|\xi-\xi_0\|<\frac{\varepsilon}{3}$ holds. Since $\xi_0\in H_{\sigma}$, we may find an $n_0\in \N$ such that $\|S_m^{\sigma}\xi_0-S_n^{\sigma}\xi_0\|<\frac{\varepsilon}{3}$ for every $n,m\ge n_0$. Then for every $n,m\ge n_0$, 
\eqa{
\|S_m^{\sigma}\xi-S_n^{\sigma}\xi \|&\le \|S_m^{\sigma}(\xi-\xi_0)\|+\|S_m^{\sigma}\xi_0-S_n^{\sigma}\xi_0\|+\|S_n^{\sigma}\xi_0-S_n^{\sigma}\xi\|\\
&<2\|\xi-\xi_0\|+\frac{\varepsilon}{3}<\varepsilon.
}
Since $\varepsilon$ is arbitrary, this shows that $(S_n^{\sigma}\xi)_{n=1}^{\infty}$ is Cauchy, whence it converges to $P\xi$ by Proposition \ref{prop: S_nsigma*converges} (2). Therefore, $\xi\in H_{\sigma}$ holds. This shows that $H_{\sigma}$ is closed. Finally, to show the last equality, we show that for each $\xi\in H$ the following conditions are equivalent:
\begin{itemize}
\item[(a)] $\disp \lim_{n\to \infty}\|S_n^{\sigma}\xi-P\xi\|=0.$
\item[(b)] $\disp \lim_{n\to \infty}\|S_n^{\sigma}\xi\|=\|P\xi\|.$
\end{itemize}
(a)$\implies$(b) is clear. Assume (b). Then because $\disp \lim_{n\to \infty}S_n^{\sigma}=P$ (WOT), we have 
\eqa{
 \|S_n^{\sigma}\xi-P\xi\|^2&=\|S_n^{\sigma}\xi\|^2+\|P\xi\|^2-2\,{\rm Re}\nai{S_n^{\sigma}\xi}{P\xi}\\
 &\xrightarrow{n\to \infty}\|P\xi\|^2+\|P\xi\|^2-2\,{\rm Re}\nai{P\xi}{P\xi}=0.
}
Therefore, (a) holds. This finishes the proof. 
\end{proof}

Then, inspired by the role played by the subspace ``$Z$" in the work of Kopeck\'{a}--Paszkiewicz \cite[Lemma 3.4]{MR3642022KopeckaPaszkiewicz2017}, we define the following closed subspace $H_{\mathscr{S}}$ of $H$:
\begin{definition}
We define the $\sigma$-Paszkiewicz subspace $H_{\sigma}$  by
\[H_{\sigma}=\{\xi\in H\mid \lim_{n\to \infty}S_n^{\sigma}\xi=P\xi\},\]
and then we set 
\[H_{\mathscr{S}}=\bigcap_{\sigma\in \mathscr{S}}H_{\sigma}.\]
\end{definition}
Let $\mathscr{M}=W^*(T_1,T_2,\dots)$. 

\begin{definition}
We say that the generalized Paszkiewicz conjecture holds for $T_1\ge T_2\ge \dots$ if $H_{\mathscr{S}}=H$ holds. 
\end{definition}

\begin{theorem}\label{thm HS0 central projection}
The following statements hold. 
\begin{list}{}{}
\item[{\rm (1)}] Both $H_{\mathscr{S}}$ is a  closed subspace of $H$ which are invariant under all $T_n$. \item[{\rm (2)}] $e$ is a central projection in $\mathscr{M}$. In particular, if $\mathscr{M}$ is a factor, then the generalized Paszkiewicz conjecture holds for $T_1\ge T_2\ge \dots$ if and only if $H_{\mathscr{S}}\neq \{0\}$. 
\end{list}
\end{theorem}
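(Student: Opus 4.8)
The plan is to take $e$ to be the orthogonal projection of $H$ onto $H_{\mathscr S}$ and to prove that $e$ is central in $\mathscr M=W^*(T_1,T_2,\dots)$ by establishing $e\in\mathscr M'$ and $e\in\mathscr M$ separately, so that $e\in\mathscr M\cap\mathscr M'=Z(\mathscr M)$; the factor assertion is then immediate, since $Z(\mathscr M)=\C 1$ forces $e\in\{0,1\}$ and hence $H_{\mathscr S}=H$ if and only if $H_{\mathscr S}\neq\{0\}$. Closedness of $H_{\mathscr S}$ needs nothing new: Proposition~\ref{prop Psubspace} tells us each $H_\sigma$ is a closed subspace, and $H_{\mathscr S}=\bigcap_{\sigma\in\mathscr S}H_\sigma$ is an intersection of closed subspaces.

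To get $e\in\mathscr M'$ I would first show that $H_{\mathscr S}$ is invariant under every $T_m$, and then use that an invariant subspace of a self-adjoint operator is reducing. For the invariance, fix $m\in\N$ and $\sigma\in\mathscr S$ and let $\tilde\sigma\in\mathscr S$ be the ``prepended'' map $\tilde\sigma(1)=m$, $\tilde\sigma(j)=\sigma(j-1)$ for $j\ge2$; this is still proper because each fiber $\tilde\sigma^{-1}(\{k\})$ differs from $\sigma^{-1}(\{k\})$ by at most one point, and by construction $S_{n+1}^{\tilde\sigma}=S_n^{\sigma}T_m$. If $\xi\in H_{\tilde\sigma}$ then $S_n^{\sigma}(T_m\xi)=S_{n+1}^{\tilde\sigma}\xi\to P\xi=PT_m\xi$, where $PT_m=P$ as in the proof of the Paszkiewicz conjecture; hence $T_m\xi\in H_\sigma$. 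A vector $\xi\in H_{\mathscr S}$ lies in $H_{\tilde\sigma}$ for every $\sigma$, so $T_mH_{\mathscr S}\subseteq H_{\mathscr S}$ for all $m$, and therefore $e$ commutes with each $T_m$, i.e.\ $e\in\{T_1,T_2,\dots\}'=\mathscr M'$.

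To get $e\in\mathscr M$ I would show that $H_{\mathscr S}$ is invariant under the von Neumann algebra $\mathscr M'$ and again pass from invariance to reducibility. Here the point is that $P=1_{\{1\}}(T)\in\mathscr M$, since $T=\lim_nT_n$ (SOT) lies in the SOT-closed algebra $\mathscr M$; consequently any $a'\in\mathscr M'$ commutes with every $T_k$ and with $P$, so $S_n^{\sigma}a'=a'S_n^{\sigma}$ and $\|S_n^{\sigma}(a'\xi)-P(a'\xi)\|\le\|a'\|\,\|S_n^{\sigma}\xi-P\xi\|$. Thus $a'H_\sigma\subseteq H_\sigma$ for each $\sigma$, hence $a'H_{\mathscr S}\subseteq H_{\mathscr S}$; since $\mathscr M'$ is $*$-closed, $H_{\mathscr S}$ reduces $\mathscr M'$ and $e\in(\mathscr M')'=\mathscr M''=\mathscr M$. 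Combined with the previous paragraph this gives $e\in Z(\mathscr M)$, which is exactly assertions (1) and (2).

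I do not anticipate a real obstacle; once Propositions~\ref{prop: S_nsigma*converges} and \ref{prop Psubspace} are available the argument is soft. The only places that require attention are bookkeeping ones: checking that the prepended map $\tilde\sigma$ is proper and that the index shift $S_{n+1}^{\tilde\sigma}=S_n^{\sigma}T_m$ is lined up correctly, and recording that $P\in\mathscr M$ so that the commutation of $\mathscr M'$ with $P$ used in the $\mathscr M$-membership step is justified.
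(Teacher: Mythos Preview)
Your proof is correct. Part~(1) matches the paper exactly: both prepend $m$ to $\sigma$ and use the index shift $S_n^{\sigma}T_m=S_{n+1}^{\tilde\sigma}$ to deduce $T_mH_{\mathscr S}\subseteq H_{\mathscr S}$.

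For part~(2) the two arguments diverge. The paper, having $e\in\mathscr M'$, proceeds via Murray--von~Neumann equivalence: if $f\in\mathscr M'$ is a projection with $f\sim e$ in $\mathscr M'$, a partial isometry $u\in\mathscr M'$ with $u^*u=e$, $uu^*=f$ is used together with the commutation $S_n^{\sigma}u=uS_n^{\sigma}$, $Pu=uP$ to show $f(H)\subseteq H_{\mathscr S}$, i.e.\ $f\le e$; hence $e$ coincides with its central support $z(e)\in Z(\mathscr M')=Z(\mathscr M)$. You instead note that the very same commutation works for \emph{every} $a'\in\mathscr M'$, so each $H_\sigma$ (not just $H_{\mathscr S}$) is $\mathscr M'$-invariant, giving $e\in(\mathscr M')'=\mathscr M$ directly. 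Your route is shorter and avoids the central-support machinery; the paper's route records the slightly sharper fact $e=z(e)$ and showcases a standard structural technique, but for the stated theorem your argument is the more economical one.
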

\begin{proof}
(1) Let $\sigma\in \mathscr{S}$, $m,n\in \N$ and $\xi\in H_{\mathscr{S}}$. Then the map $\hat{\sigma}\colon \N\to \N$ defined by $\hat{\sigma}(1)=m, \hat{\sigma}(k+1)=\sigma(k),\,k\in \N$ is an element in $\mathscr{S}$. Thus, 
\[S_{n}^{\sigma}(T_m\xi)=S_{n+1}^{\hat{\sigma}}\xi\xrightarrow{n\to \infty}P\xi=P(T_m\xi)\]
by $\xi\in H_{\hat{\sigma}}$. Therefore $T_m\xi\in H_{\mathscr{S}}$. This shows that $H_{\mathscr{S}}$ is invariant under $T_m$.\\
(2) By (1), $e$ belongs to $\mathscr{M}'$. 
Assume that $f$ is a projection in $\mathscr{M}'$ such that $e\sim f$ in $\mathscr{M}'$ ($\sim$ denotes the Murray-von Neumann equivalence of projections). By $e\sim f$, there exists a partial isometry $u\in \mathscr{M}'$ such that $u^*u=e$  and $uu^*=f$. Let $\xi\in f(H)$ and $\sigma\in \mathscr{S}$. 
Then $u^*\xi\in e(H)\subset H_{\sigma}$. By $P,S_n^{\sigma}\in \mathscr{M}$ and $u\in \mathscr{M}'$, we have 
\[S_n^{\sigma}\xi=S_n^{\sigma}uu^*\xi=uS_n^{\sigma}(u^*\xi)\xrightarrow{n\to \infty}uPu^*\xi=Puu^*\xi=P\xi.\]
This shows that $\xi\in H_{\sigma}$. Since $\sigma\in \mathscr{S}$ is arbitrary, we obtain $\xi\in H_{\mathscr{S}}=e(H)$. Therefore, $f\le e$ holds. It then follows that 
\[
e\le \bigvee \left \{ueu^*\mid u\in U(\mathscr{M}')\right \}\le \bigvee \left \{f\in {\rm Proj}(\mathscr{M}')\mid f\sim e\right \}\le e,\] 
whence $e=\bigvee \left \{ueu^*\mid u\in U(\mathscr{M}')\right \}=z(e)$ holds (here, $z(e)$ is the central support of $e$). The last claim is then immediate. 
\end{proof}

We then give examples of sequences for which the  generalized Paszkiewicz conjecture holds (that many of such examples exist was the motivation for this generalization).  
It is clear that the generalized Paszkiewicz conjecture is true when $\mathscr{M}=W^*(T_1,T_2,\dots)$ is a finite von Neumann algebra, because $\displaystyle \lim_{n\to \infty}(S_n^{\sigma})^*=P$ (SOT) for every $\sigma\in \mathscr{S}$ and the $*$-operation is SOT-continuous on the unit ball of $\mathscr{M}$. 
The next result shows that the generalized Paszkiewicz conjecture is true also for sequences with uniform spectral gap at 1, which generalizes \cite{AMPaszkiewicz}.  
\begin{proposition}\label{prop unif gap strong P}
If $T_1\ge T_2\ge \dots$ has uniform spectral gap at 1, then the generalized Paszkiewicz conjecture holds for it. 
\end{proposition}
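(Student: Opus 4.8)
The plan is to run the same kind of spectral bookkeeping used for the uniform-gap case in \cite{AMPaszkiewicz}, but keeping a fixed arbitrary $\sigma\in\mathscr{S}$ throughout. So fix $\sigma\in\mathscr{S}$; we must show $H_{\sigma}=H$, i.e. $\disp\lim_n S_n^{\sigma}\xi=P\xi$ for every $\xi\in H$. Since $T\le T_j$ gives $T_jP=PT_j=P$ for every $j$ (the argument used in the proof of the Paszkiewicz conjecture), one has $S_n^{\sigma}P=P$, so it suffices to treat $\xi\in P^{\perp}(H)$ and prove $S_n^{\sigma}\xi\to0$. Put $\eta_1=\xi$ and $\eta_{k+1}=T_{\sigma(k)}\eta_k=S_k^{\sigma}\xi$. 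From $PT_{\sigma(k)}=P$ we get $P\eta_{k+1}=P\eta_k$, hence $\eta_k\in P^{\perp}(H)$ for all $k$; and since each $T_{\sigma(k)}$ is a contraction, $\|\eta_k\|$ is non-increasing, so $L:=\lim_k\|\eta_k\|$ exists. The goal is $L=0$.

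Let $P_m=1_{\{1\}}(T_m)$, so $P_1\ge P_2\ge\cdots\ge P$ and $P_m\to P$ (SOT) by \cite[Lemma~2.2]{AMPaszkiewicz}, and let $\delta\in(0,1)$, $N\in\N$ witness the uniform spectral gap. Two elementary observations drive the estimate. First, for $j\le m$ one has $T_jP_m=P_mT_j=P_m$: from $T_j\ge T_m$ we get $0\le 1-T_j\le 1-T_m$, so every $\zeta$ with $(1-T_m)\zeta=0$ satisfies $\langle(1-T_j)\zeta,\zeta\rangle=0$, hence $(1-T_j)^{1/2}\zeta=0$; thus $T_j$ commutes with $P_m$ and preserves $P_m^{\perp}(H)$. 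Second, if $m\ge N$, $n\ge m$, and $v\in P_m^{\perp}(H)$, then $\|T_nv\|\le(1-\delta)\|v\|$: since $n\ge N$ we have $\sigma(T_n)\subseteq[0,1-\delta]\cup\{1\}$, so $P_n^{\perp}=1_{[0,1-\delta]}(T_n)$ and $\|T_nP_n^{\perp}\|\le 1-\delta$; and $n\ge m$ forces $P_n\le P_m$, whence $P_nv=0$ and $T_nv=T_nP_n^{\perp}v$.

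Now fix $k_0\in\N$ and $\ve>0$. Since $\eta_{k_0}\in P^{\perp}(H)$ and $P_m\to P$ (SOT), choose $m\ge N$ with $\|P_m\eta_{k_0}\|<\ve$, and set $k^{\ast}:=\min\{k\ge k_0:\sigma(k)\ge m\}$, which exists because $\sigma$ is proper, i.e. $\sigma(k)\to\infty$. Decompose $\eta_{k_0}=a+b$ with $a=P_m^{\perp}\eta_{k_0}$, $b=P_m\eta_{k_0}$ (so $\|b\|<\ve$, $\|a\|\le\|\eta_{k_0}\|$), and for $j\ge k_0$ write $\eta_j=a_j+b_j$, where $a_j$ (resp. $b_j$) is the image of $a$ (resp. $b$) under $T_{\sigma(j-1)}\cdots T_{\sigma(k_0)}$; then $\|b_j\|\le\|b\|<\ve$. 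For $k_0\le k<k^{\ast}$ we have $\sigma(k)<m$, so by the first observation every such $T_{\sigma(k)}$ preserves $P_m^{\perp}(H)$; hence $a_{k^{\ast}}\in P_m^{\perp}(H)$ with $\|a_{k^{\ast}}\|\le\|a\|\le\|\eta_{k_0}\|$. Applying the second observation at step $k^{\ast}$ (where $\sigma(k^{\ast})\ge m\ge N$) gives $\|a_{k^{\ast}+1}\|\le(1-\delta)\|\eta_{k_0}\|$, and all later steps are contractions, so $\|a_j\|\le(1-\delta)\|\eta_{k_0}\|$ for all $j>k^{\ast}$. Thus $\|\eta_j\|\le(1-\delta)\|\eta_{k_0}\|+\ve$ for $j>k^{\ast}$, so $L\le(1-\delta)\|\eta_{k_0}\|+\ve$; letting $\ve\to0$ gives $L\le(1-\delta)\|\eta_{k_0}\|$ for every $k_0$, and letting $k_0\to\infty$ (so $\|\eta_{k_0}\|\to L$) gives $L\le(1-\delta)L$, hence $L=0$. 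I expect the only delicate point to be the bookkeeping in this last paragraph: one must exploit properness of $\sigma$ to guarantee that, after a possibly long initial run of indices $<m$ (which are "harmless" because of the first observation), there is always a first index $\ge m$ at which the gap estimate applies, and then notice that the resulting bound $L\le(1-\delta)\|\eta_{k_0}\|$ is self-improving as $k_0\to\infty$.
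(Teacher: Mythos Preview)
Your proof is correct. The core ingredients are the same as in the paper---the invariance $T_jP_m^{\perp}(H)\subset P_m^{\perp}(H)$ for $j\le m$ and the contraction $\|T_nv\|\le(1-\delta)\|v\|$ for $v\in P_m^{\perp}(H)$ when $n\ge m\ge N$ (together these constitute the paper's Lemma~\ref{lem TiTj})---but the packaging differs. The paper builds an increasing sequence of ``record times'' $n_1<n_2<\cdots$ at which $\sigma(n_k)$ exceeds all earlier values, tracks the iterate through the nested subspaces $P_{\max\{\sigma(1),\dots,\sigma(n_k-1),n_0\}}^{\perp}(H)$, and applies the gap estimate once at each $n_k$ to obtain a quantitative bound $\|S_{n_k}^{\sigma}P_{n_0}^{\perp}\xi\|\le(1-\delta)^k$. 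You instead stay at a single level $P_m^{\perp}$, apply the gap estimate once at the first time $k^{\ast}$ with $\sigma(k^{\ast})\ge m$, and then close with the self-improving inequality $L\le(1-\delta)\|\eta_{k_0}\|\to(1-\delta)L$. Your route is a bit slicker and only needs the $i\le j$ half of Lemma~\ref{lem TiTj}; the paper's route is more constructive and yields an explicit rate of convergence.
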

\begin{lemma}\label{lem TiTj}
Let $T_1\ge T_2$ be positive contractions on $H$. Let $P_i=1_{\{1\}}(T_i)\,(i=1,2)$. Then for each $i,j\in \{1,2\}$, $T_iP_j^{\perp}(H)\subset P_{\max\{i,j\}}^{\perp}(H)$ holds. 
\end{lemma}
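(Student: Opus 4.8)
The plan is to reduce the statement to two elementary facts about the spectral projections and then check the four cases $i,j\in\{1,2\}$ by a one-line computation. Observe that $T_iP_j^\perp(H)\subset P_{\max\{i,j\}}^\perp(H)$ is equivalent to the operator identity $P_{\max\{i,j\}}T_iP_j^\perp=0$, so everything will come down to manipulating products of $P_1,P_2,T_1,T_2$.

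First I would establish the key claim: if $0\le S\le T\le I$ and $S\xi=\xi$ for some $\xi\in H$, then $T\xi=\xi$. Indeed $\|\xi\|^2=\langle S\xi,\xi\rangle\le\langle T\xi,\xi\rangle\le\|\xi\|^2$ forces $\langle(I-T)\xi,\xi\rangle=0$, and since $I-T\ge 0$ this gives $(I-T)^{1/2}\xi=0$, hence $T\xi=\xi$. Applying this with $S=T_2$, $T=T_1$ shows that every $\xi$ in $P_2(H)=\{\xi:T_2\xi=\xi\}$ also lies in $P_1(H)=\{\xi:T_1\xi=\xi\}$, i.e.
\[
\text{(a)}\quad P_2\le P_1.
\]
From (a) and the functional-calculus identity $T_1P_1=P_1$ (since $t\,1_{\{1\}}(t)=1_{\{1\}}(t)$) I get $T_1P_2=T_1P_1P_2=P_1P_2=P_2$, and taking adjoints,
\[
\text{(b)}\quad P_2T_1=T_1P_2=P_2.
\]
I also record the analogous trivial fact $P_iT_i=T_iP_i=P_i$ for $i=1,2$.

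Next I would dispatch the four cases. If $i\ge j$ (so $\max\{i,j\}=i$), then $P_iT_iP_j^\perp=P_iP_j^\perp$ using $P_iT_i=P_i$; and $P_i\le P_j$ (trivially if $i=j$, by (a) if $i=2,j=1$), so $P_iP_j^\perp=P_i-P_iP_j=0$. If $i=1<j=2$ (so $\max\{i,j\}=2$), then by (b) we have $P_2T_1P_2^\perp=P_2P_2^\perp=0$. In every case $P_{\max\{i,j\}}T_iP_j^\perp=0$, which is precisely $T_iP_j^\perp(H)\subset P_{\max\{i,j\}}^\perp(H)$. There is no real obstacle here; the only point requiring care is the bookkeeping of the four cases together with the correct direction of the comparison $P_2\le P_1$, which is exactly what produces the $\max\{i,j\}$ in the statement.
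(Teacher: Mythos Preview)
Your proof is correct and follows essentially the same route as the paper's: both rely on the facts $P_2\le P_1$, $T_1P_2=P_2T_1=P_2$, and $T_iP_i=P_i$, and then check the cases. The only cosmetic difference is that you encode the subspace inclusion as the operator identity $P_{\max\{i,j\}}T_iP_j^\perp=0$, whereas the paper argues directly at the level of ranges (using $T_iP_j^\perp=P_j^\perp T_i$ when $i<j$, and $P_j^\perp\le P_i^\perp$ when $i>j$).
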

\begin{proof}
The case $i=j$ is trivial. Assume $i<j$. Then $T_i\ge T_j$, whence $P_i\ge P_j$ and $T_iP_j=P_jT_i=P_j$. Thus $T_iP_j^{\perp}(H)=P_j^{\perp}T_i(H)\subset P_j^{\perp}(H)$. If $i>j$, then $P_j\ge P_i$, whence $P_j^{\perp}\le P_i^{\perp}$. Therefore, $T_iP_j^{\perp}(H)\subset T_iP_i^{\perp}(H)\subset P_i^{\perp}(H)$. 
\end{proof}
\begin{proof}[Proof of Proposition \ref{prop unif gap strong P}]
Let $\sigma\in \mathscr{S}$. 
Let $\varepsilon>0$ and $\xi\in P^{\perp}(H)$ be a unit vector. Then there exists $n_0\in \N$ such that $\|\xi-P_{n_0}^{\perp}\xi\|<\frac{1}{2}\varepsilon$ holds. 
By assumption, there exists $N\in \N$ and $\delta\in (0,1)$ such that $\sigma(T_n)\cap (1-\delta,1)=\emptyset$ for every $n\ge N$. We may assume that $N\ge n_0$. 
Using the properness of $\sigma$, we may find an increasing sequence of natural numbers 
\[2\le n_1<n_2<\cdots\]
such that $\sigma(n_k)>\max \{N,\sigma(1),\dots,\sigma(n_k-1)\}$
for every $k\in \N$. Indeed, by the properness of $\sigma$, there exists $n_1\ge 2$ such that $\sigma(n_1)\ge N$ holds. Assume that we have found $n_1<n_2<\cdots <n_k$. Then by the properness of $\sigma$, there exists $n\in \N$ for which $n>n_k$ and $\sigma(n)>\sigma(n_k)$ holds. Let $n_{k+1}$ be the smallest such $n$. Then $\sigma(n_{k+1})>N,\sigma(n_k)$, and if $n<n_k$, then $\sigma(n)<\sigma(n_k)<\sigma(n_{k+1})$ and if $n_k\le n<n_{k+1}$, then $\sigma(n_{k+1})>\sigma(n_k)\ge \sigma(n)$ by the choice of $n_{k+1}$. By induction, we have the $n_1<n_2<\dots$ with the required properties. 
Choose $k\in \N$ for which $(1-\delta)^k<\frac{1}{2}\varepsilon$ holds. 
Then for every $n\ge n_k$, we have 
\[\|S_n^{\sigma}\xi\|\le \|S_{n_k}^{\sigma}\xi\|\le \|S_{n_k}^{\sigma}(\xi-P_{n_0}^{\perp}\xi)\|+\|S_{n_k}^{\sigma}P_{n_0}^{\perp}\xi\|\]
and by a repeated use of Lemma \ref{lem TiTj}, we have
\eqa{
S_{n_k}^{\sigma}P_{n_0}^{\perp}\xi=T_{n_k}(\underbrace{T_{\sigma(n_k-1)}\cdots T_{\sigma(1)}P_{n_0}^{\perp}\xi)}_{\in P_{\max\{\sigma(1),\dots,\sigma(n_k-1),n_0\}}^{\perp}(H)}
}
And by $\sigma(n_k)>\max(\sigma(1),\dots,\sigma(n_k-1),n_0,N)$, we have 
\[P_{\max\{\sigma(1),\dots,\sigma(n_k-1),n_0\}}^{\perp}(H)\subset P_{\sigma(n_k)}^{\perp}(H)=1_{[0,1-\delta]}(T_{\sigma(n_k)})(H).\]
This implies that 
\eqa{
\|S_{n_k}^{\sigma}P_{n_0}^{\perp}\xi\|&\le (1-\delta)\|T_{\sigma(n_k-1)}\cdots T_{\sigma(n_{k-1}+1)}S_{n_{k-1}}^{\sigma}P_{n_0}^{\perp}\xi\|\\
&\le (1-\delta)\|S_{n_{k-1}}^{\sigma}P_{n_0}^{\perp}\xi\|\\
&\le \dots \le (1-\delta)^k\|S_{n_1-1}^{\sigma}P_{n_0}^{\perp}\xi\|\\
&\le (1-\delta)^k<\frac{\varepsilon}{2}.
}
Therefore $\|S_n^{\sigma}\xi\|<\varepsilon\,(n\ge n_k)$. Since $\varepsilon$ is arbitrary, we obtain $\disp \lim_{n\to \infty}\|S_n^{\sigma}\xi\|=0$ for every $\xi\in P^{\perp}(H)$, i.e., $H_{\sigma}=H$ holds. 
\end{proof}

\begin{proposition}\label{prop norm conv isolated}
Let $T_1\ge T_2\ge \cdots$ be a decreasing sequence of positive contractions on $H$. Assume that $1$ is isolated in $\sigma(T)$ and $\disp \lim_{n\to \infty}\|T_n-T\|=0$. Then $\disp \lim_{n\to \infty}\|S_n^{\sigma}-P\|=0$ for every $\sigma\in \mathscr{S}$. In particular, the generalized Paszkiewicz conjecture is true for $T_1\ge T_2\ge \cdots$. 
\end{proposition}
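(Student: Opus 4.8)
The plan is to turn the statement into a single product‑of‑operator‑norms estimate on the subspace $P^{\perp}(H)$.

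First I would record two elementary reductions. Since $T_1\ge T_2\ge\cdots\ge T$ and $TP=P$ while each $T_n$ is a positive contraction, we have $\langle(I-T_n)P\xi,P\xi\rangle\le\langle(I-T)P\xi,P\xi\rangle=0$, and positivity of $I-T_n$ forces $T_nP=PT_n=P$ for every $n$ (this is already used in the proof of the Paszkiewicz conjecture). Hence $S_n^{\sigma}P=P$, and therefore $S_n^{\sigma}-P=S_n^{\sigma}P^{\perp}$ for every $n$ and every $\sigma\in\mathscr{S}$. Second, since $1$ is isolated in $\sigma(T)$, choose $\delta\in(0,1)$ with $\sigma(T)\subseteq[0,1-\delta]\cup\{1\}$; then $P^{\perp}=1_{[0,1-\delta]}(T)$, so $\|TP^{\perp}\|\le1-\delta$, and because $\|T_n-T\|\to0$ there is $N\in\N$ with $\|T_nP^{\perp}\|\le1-\delta/2$ for all $n\ge N$.

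Next, since $P$ (hence $P^{\perp}$) commutes with every $T_m$, I would factor
\[
S_n^{\sigma}P^{\perp}=(T_{\sigma(n)}P^{\perp})(T_{\sigma(n-1)}P^{\perp})\cdots(T_{\sigma(1)}P^{\perp}),
\]
so that, by submultiplicativity of the operator norm over this finite product,
\[
\|S_n^{\sigma}-P\|=\|S_n^{\sigma}P^{\perp}\|\le\prod_{k=1}^{n}\|T_{\sigma(k)}P^{\perp}\|.
\]
Each factor is $\le1$, and it is $\le1-\delta/2$ whenever $\sigma(k)\ge N$. By properness of $\sigma$ the set $\sigma^{-1}(\{1,\dots,N-1\})$ is finite, say of cardinality $R$; hence at least $n-R$ of the indices $k\le n$ satisfy $\sigma(k)\ge N$, and so $\|S_n^{\sigma}-P\|\le(1-\delta/2)^{\,n-R}\to0$ as $n\to\infty$. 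In particular $H_{\sigma}=H$ for every $\sigma\in\mathscr{S}$, whence $H_{\mathscr{S}}=H$ and the generalized Paszkiewicz conjecture holds.

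I do not expect a genuine obstacle here. In fact the hypotheses already force uniform spectral gap at $1$: the decomposition $T_n=P\oplus(T_n|_{P^{\perp}(H)})$ with $\|T_n|_{P^{\perp}(H)}\|\le1-\delta/2$ shows $\sigma(T_n)\cap(1-\delta/2,1)=\emptyset$ for $n\ge N$, so Proposition~\ref{prop unif gap strong P} already yields $H_{\mathscr{S}}=H$; the only new content is the upgrade from strong to norm convergence, which the displayed product estimate delivers in one stroke. The only points needing a little care are the commutation/factorization identity and the bookkeeping of how many of the $n$ factors enjoy the bound $1-\delta/2$, both of which are routine consequences of $[P,T_m]=0$ and of the properness of $\sigma$.
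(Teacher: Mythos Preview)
Your argument is correct, and it takes a genuinely different route from the paper's proof. The paper fixes $k$ large enough that $\|T^{k}-P\|<\varepsilon$, then uses $\lim_{n}\|T_{\sigma(n+1)}\cdots T_{\sigma(n+k)}-T^{k}\|=0$ (a consequence of $\|T_{\sigma(n)}-T\|\to 0$) together with $T_{j}P=P$ to bound $\limsup_{n}\|(S_{n}^{\sigma})^{*}-P\|$ by $\|T^{k}-P\|$; the argument thus controls a fixed-length tail of the product via comparison with $T^{k}$ and never needs the commutation $[P^{\perp},T_{m}]=0$ explicitly. Your approach instead exploits that commutation to insert a copy of $P^{\perp}$ between every pair of factors, reducing the problem to the single product estimate $\prod_{k\le n}\|T_{\sigma(k)}P^{\perp}\|$ and yielding an explicit exponential rate $(1-\delta/2)^{\,n-R}$. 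Your method is more elementary and quantitative; the paper's method is slightly more flexible in that it only uses $\|T_{\sigma(n)}-T\|\to 0$ to approximate finite blocks by powers of $T$, which is closer in spirit to the summability argument of the subsequent proposition. Your closing remark that the hypotheses already imply uniform spectral gap at $1$ matches exactly what the paper records (attributed to Matui) in the remark following the proposition.
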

\begin{proof}By assumption, there exists $\delta\in (0,1)$ such that $\sigma(T)\cap (1-\delta,1)=\emptyset$.  
In particular, $\|T^k-P\|=\|T^kP^{\perp}\|\le (1-\delta)^k\xrightarrow{k\to \infty}0$. Therefore, for a given $\varepsilon>0$, there exists $k\in \N$ such that $\|T^k-P\|<\varepsilon$. 
Then by $T_jP=P\,(j\in \N)$ and $\disp \lim_{n\to \infty}\|T_{\sigma(n+1)}\dots T_{\sigma(n+k)}-T^k\|=0$, we have 
\eqa{
\limsup_{n\to \infty}\|(S_n^{\sigma})^*-P\|&=\limsup_{n\to \infty}\|(S_{n+k}^{\sigma})^*-P\|\\
&=\limsup_{n\to \infty}\|T_{\sigma(1)}\dots T_{\sigma(n)}(T_{\sigma(n+1)}\dots T_{\sigma(n+k)}-P)\|\\
&\le \limsup_{n\to \infty}\|T_{\sigma(n+1)}\dots T_{\sigma(n+k)}-P\|\\
&=\|T^k-P\|<\varepsilon.
}
Since $\varepsilon$ is arbitrary, we obtain $\disp \lim_{n\to \infty}\|(S_n^{\sigma})^*-P\|=\lim_{n\to \infty}\|S_n^{\sigma}-P\|=0$. 
\end{proof}
\begin{remark}\label{rem norm convergence and  1 isolated in spec(T)}
If $T_1\ge T_2\ge \dots$ is a decreasing sequence of positive contraction as in Proposition \ref{prop norm conv isolated}, then it has uniform spectral gap at 1 (this was pointed out by Hiroki Matui). 
Thus, Proposition \ref{prop norm conv isolated} is a special case of Proposition \ref{prop unif gap strong P}. On the other hand, note that for a general decreasing sequence of positive contractions with uniform spectral gap at 1, one cannot hope that $S_n$ converges in norm (consider e.g., the case where all $T_n$ are projections).   
\end{remark}
If $T_n$ converges to $T$ faster (but $1$ is possibly not isolated in $\sigma(T)$), then we also have a slightly weaker  conclusion. 
\begin{proposition}\label{prop norm summabilty implies Paszkiewicz}
Let $T_1\ge T_2\ge \cdots$ be a sequence of positive contractions on $H$. Assume that $\disp \sum_{n=1}^{\infty}\|T-T_n\|<\infty$ holds. Then for each $\sigma \in \mathscr{S}$ such that $\sup_{n\in \N}\sharp \sigma^{-1}(\{n\})<\infty$, $\disp \lim_{n\to \infty}S_n^{\sigma}=P$ {\rm (SOT)} holds. 
\end{proposition}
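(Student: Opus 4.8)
The plan is to reduce each product $S_n^{\sigma}$ to a power of $T$, up to a controllably small error, and then invoke the elementary fact that $T^m\to P$ (SOT) recorded in Example~\ref{ex easy case}(1).

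The first step is to extract the arithmetic consequence of the hypotheses. Setting $M:=\sup_{n}\#\sigma^{-1}(\{n\})<\infty$, one has
\[
\sum_{n=1}^{\infty}\|T-T_{\sigma(n)}\|=\sum_{k=1}^{\infty}\#\sigma^{-1}(\{k\})\,\|T-T_k\|\le M\sum_{k=1}^{\infty}\|T-T_k\|<\infty,
\]
so the tails $\varepsilon_N:=\sum_{j>N}\|T-T_{\sigma(j)}\|$ tend to $0$ as $N\to\infty$. This is the only point at which the assumption $\sup_n\#\sigma^{-1}(\{n\})<\infty$ is used, and it is exactly what prevents this argument from covering an arbitrary proper $\sigma$.

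Next I would prove a telescoping estimate: for $n>N$, comparing the block $T_{\sigma(n)}\cdots T_{\sigma(N+1)}$ with $T^{\,n-N}$ term by term and using that each $T_{\sigma(j)}$ and $T$ has norm $\le 1$, a short induction on $n$ gives
\[
\|T_{\sigma(n)}\cdots T_{\sigma(N+1)}-T^{\,n-N}\|\le\sum_{j=N+1}^{n}\|T-T_{\sigma(j)}\|\le\varepsilon_N .
\]
Since $S_n^{\sigma}=(T_{\sigma(n)}\cdots T_{\sigma(N+1)})\,S_N^{\sigma}$ and $\|S_N^{\sigma}\|\le 1$, this yields, for every fixed $\xi\in H$, the bound $\|S_n^{\sigma}\xi-T^{\,n-N}S_N^{\sigma}\xi\|\le\varepsilon_N\|\xi\|$ for all $n>N$.

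Finally, fix $\xi$ and $N$. Because $PT_{\sigma(j)}=T_{\sigma(j)}P=P$ for all $j$, we have $PS_N^{\sigma}\xi=P\xi$, and since $T^m\to P$ (SOT) we get $T^{\,n-N}S_N^{\sigma}\xi\to P\xi$ as $n\to\infty$. Combining this with the previous estimate gives $\limsup_{n\to\infty}\|S_n^{\sigma}\xi-P\xi\|\le\varepsilon_N\|\xi\|$, and letting $N\to\infty$ produces $\lim_{n\to\infty}\|S_n^{\sigma}\xi-P\xi\|=0$, i.e.\ $S_n^{\sigma}\to P$ (SOT). I do not expect a genuine obstacle here: this is a routine telescoping argument once the summability of $\|T-T_{\sigma(n)}\|$ is in hand, and the latter is immediate from the bounded-multiplicity hypothesis.
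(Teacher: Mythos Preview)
Your proof is correct and follows essentially the same approach as the paper: both arguments use the bounded-multiplicity hypothesis to ensure $\sum_n\|T-T_{\sigma(n)}\|<\infty$, then compare the tail block $T_{\sigma(n)}\cdots T_{\sigma(N+1)}$ with $T^{\,n-N}$ via a telescoping sum and invoke $T^m\to P$ (SOT). Your version is slightly more explicit about why the summability condition on $\|T-T_{\sigma(n)}\|$ follows from the hypotheses, but the structure and the key estimates are the same.
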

\begin{proof}
Let $\xi\in H$ be a unit vector and $\varepsilon>0$. By the hypothesis on $\sigma$, we have $\disp \sum_{n=1}^{\infty}\|T-T_{\sigma(n)}\|<\infty$. 
Choose $n_0\in \N$ such that $\disp \sum_{n=n_0+1}^{\infty}\|T-T_{\sigma(n)}\|<\frac{\varepsilon}{2}$ holds. 
Since $\disp \lim_{n\to \infty}T^n=P$ (SOT), there exists $n_1\in \N$ such that $\|T^nS_{n_0}^{\sigma}\xi-PS_{n_0}^{\sigma}\xi\|=\|T^nS_{n_0}^{\sigma}\xi-P\xi\|<\frac{\varepsilon}{2}$ for every $n\ge n_1$.

Let $S_{n,n_0+1}^{\sigma}=T_{\sigma(n+n_0)}\cdots T_{\sigma(n_0+1)}$ ($n$ products). 
For every $n\in \N$, we have 
\eqa{
    \|S_{n,n_0+1}^{\sigma}-T^n\|&\le \sum_{k=1}^{n-1}\|T_{\sigma(n+n_0)}\cdots T_{\sigma(n_0+k+1)}(T_{\sigma(n_0+k)}-T)T^{k-1}\|+\|(T_{\sigma(n+n_0)}-T)T^{n-1}\|\\
    &\le \sum_{k=n_0+1}^{\infty}\|T-T_{\sigma(k)}\|<\frac{\varepsilon}{2}.
}
Therefore, by $S_{n+n_0}^{\sigma}=S_{n,n_0+1}^{\sigma}S_{n_0}^{\sigma}$, it holds that for every 
$n\ge n_1$, 
\eqa{
    \|S_{n+n_0}^{\sigma}\xi-P\xi\|&\le \|(S_{n,n_0+1}^{\sigma}-T^n)S_{n_0}^{\sigma}\xi\|+\|T^nS_{n_0}^{\sigma}\xi-PS_{n_0}^{\sigma}\xi\|\\
    &<\|S_{n,n_0+1}^{\sigma}-T^n\|+\frac{\varepsilon}{2}<\varepsilon.
}
Since $\varepsilon$ is arbitrary, this shows that $\disp \lim_{n\to \infty}\|S_n^{\sigma}\xi-P\xi\|=0$.
\end{proof}

\section*{Acknowledgments}
We would like to thank Professors Masaru Nagisa and Mitsuru Uchiyama for useful discussions regarding the spectral order which we used in an earlier version of the paper and for pointing us to related literatures, Professor Hiroki Matui for his useful comment at Remark \ref{rem norm convergence and  1 isolated in spec(T)}. 
H.~Ando is supported by Japan Society for the Promotion of Sciences (JSPS) KAKENHI 20K03647. N.~Ozawa 
is partially supported by JSPS KAKENHI 20H00114 and  24K00527.

\bibliographystyle{siam}
\bibliography{references} 
\end{document}